\documentclass[11pt,a4paper]{amsart}

\usepackage[T1]{fontenc}
\usepackage{lmodern}
\usepackage{microtype}
\usepackage{amsmath,amssymb,amsthm,calc}
\usepackage{enumitem}
\usepackage{xcolor}
\usepackage[
  colorlinks=true,
  linkcolor=blue!55!black,
  citecolor=blue!55!black,
  urlcolor=blue!55!black
]{hyperref}

\newcommand{\PP}{\mathbf P}
\newcommand{\OO}{\mathcal O}
\newcommand{\Tr}{\mathrm{Tr}}
\newcommand{\Q}{\mathbb{Q}}

\theoremstyle{plain}
\newtheorem{theorem}{Theorem}[section]
\newtheorem{lemma}[theorem]{Lemma}
\newtheorem{proposition}[theorem]{Proposition}
\newtheorem{corollary}[theorem]{Corollary}

\theoremstyle{definition}
\newtheorem{remark}[theorem]{Remark}

\title[Sharp Bounds for Totally Invariant Cycles of Projective Varieties]
{Sharp Bounds for Totally Invariant Cycles of Projective Varieties}

\author{Wentao Chang}
\address{School of Mathematical Sciences, Fudan University, 
People's Republic of China}
\email{wtchang21@m.fudan.edu.cn}

\author{Yujie Luo}

\address{School of Mathematical Sciences, University of Science and Technology of China, People's Republic of China}

\email{yujieluo96@gmail.com}

\subjclass[2020]{Primary 14E05; Secondary 14C25}

\keywords{int-amplified endomorphism, totally invariant subvariety}
\hypersetup{
  pdftitle={Sharp Bounds for Totally Invariant Cycles of Projective Varieties},
  pdfauthor={Wentao Chang and Yujie Luo},
  pdfkeywords={int-amplified endomorphism, totally invariant subvariety}
}

\begin{document}

\begin{abstract}
Let $X$ be a smooth projective variety, $f:X\to X$ an int-amplified endomorphism, and $L$ any ample line bundle on $X$. We prove that, in every codimension, the total degree of
prime cycles that become totally invariant under an iterate of $f$ satisfies an explicit Hilbert-function bound on their total $L$-degree. In particular, when $X=\mathbf{P}^n$, the number of totally invariant prime $(n-r)$-cycle is bounded by $\binom{n+1}{r}$, and this bound is optimal.
\end{abstract}

\maketitle

\tableofcontents
\enlargethispage{2pt}

\section{Introduction}

We work over an algebraically closed field $k$.

\medskip

Let $X$ be a projective variety of dimension $n\geq1$ and $f:X\to X$ an endomorphism. A subvariety $V$ of $X$ is said to be \emph{totally-periodic}
if $(f^b)^{-1}(V)=V$ set-theoretically for some integer $b\geq1$. $X$ is said to be \emph{totally-invariant} if $b=1$. 

Fakhruddin proved that if $f$ is dominant and $f^*L\otimes L^{-1}$ is ample
for some line bundle $L$, then the periodic points of $f$ are Zariski dense
\cite[Theorem~5.1]{Fak03}. Totally invariant subvarieties, on the other hand,
reflect exceptional backward dynamics. For example, a non-isomorphic
endomorphism of a complex rational curve has at most two points with finite
backward orbit \cite[Lemma~4.9]{Mil06}.

The exceptional sets arising in backward dynamics also govern inverse-image
equidistribution. Over $\mathbb C$, Favre--Jonsson studied curves on $\PP^2$
\cite{FJ03}. In higher-dimensional projective spaces, Dinh--Sibony established
equidistribution for the normalized preimages of generic hypersurfaces and
proved the finiteness of totally invariant algebraic sets
\cite[Theorem~1.1 and Corollary~6.5]{DS08}. They subsequently developed
super-potential theory for positive closed currents of arbitrary bidegree
\cite{DS09} and obtained quantitative equidistribution results for preimages
of points \cite[Theorem~1.1]{DS10}.

Consider now an endomorphism of projective space of algebraic degree at least
two. Forn\ae ss--Sibony proved that the sum of the degree of totally invariant hypersurfaces is at most $n+1$
\cite[Proposition~4.2]{FS94}. Cerveau--Lins Neto later showed that a totally
invariant hypersurface of degree at least three is singular
\cite[Theorem~1]{CLN00}. H\"oring proved that every totally invariant prime
divisor in $\PP^3$ is a hyperplane \cite[Corollary~1.2]{Hor17}, and Mabed
proved that totally invariant prime divisors with isolated singularities are
also hyperplanes \cite[Corollary~1.4]{Mab23}.

For the point case, on $\PP^2$, Forn\ae ss--Sibony bounded the cardinality of every finite set $E$ satisfying $f^{-1}(E)=E$
by a constant depending on the algebraic degree of $f$
\cite[Theorem~4.7]{FS94}. Favre asked whether every such endomorphism of
$\PP^2$ has at most three totally invariant points; the coordinate power map
shows that this bound would be sharp \cite[\S~4, Question~1]{Fav03}.
Amerik--Campana proved the degree-independent bound of nine on the number of
points $p\in\PP^2$ over which $f$ is completely ramified, meaning that
$f^{-1}(p)$ consists of one point \cite[Theorem~1]{AC05}.

\medskip

For $1\leq r\leq n$, we introduce the following set
$$ T_{\infty}^r(X,f):=\{V\subseteq X: V\text{ is a totally-periodic integral closed subvariety of codimension }r\}. $$
When $r=n$, we write $T_\infty(f):=T_{\infty}^n(X,f)$. If $X=\PP^n$, we abbreviate $T_{\infty}^r(\PP^n,f)$ to $T_{\infty}^r(f)$. Thus $T_\infty(f)$ consists of the closed points $x\in X$ such that $(f^b)^{-1}(x)=\{x\}$ for some $b\geq1$.

Our main result applies when $X$ is smooth, $f$ is int-amplified, and
$\deg(f)$ is invertible in $k$. The int-amplified condition extends the
polarized setting studied by Nakayama--Zhang for complex normal projective
varieties \cite{NZ10}; by definition, $f^*M\otimes M^{-1}$ is ample for some
ample line bundle $M$ on $X$ \cite[Definition~2.2]{Men20}.
Meng--Zhang proved that $X$ has only finitely many totally periodic Zariski
closed subsets \cite[Proposition~3.6]{MZ20}. When $k=\mathbb C$, this
finiteness result first appeared in analytic form in \cite[\S~3.4]{DS03}. In
particular, each
$T_{\infty}^r(X,f)$ is finite. The following theorem gives explicit bounds for
their total degrees and cardinalities.

\begin{theorem}\label{thm: general smooth bound}
Let $X$ be a smooth projective variety of dimension $n\geq1$ over an
algebraically closed field $k$, and let $f:X\to X$ be an int-amplified
endomorphism such that $\deg(f)$ is invertible in $k$. Fix an ample line
bundle $L$ on $X$, an integer $a\geq1$ such that $L^{\otimes a}$ is very
ample, and an integer $r$ with $1\leq r\leq n$. Set
$$ B_{X,L,a,r}:=\sum_{j=0}^{n-r}(-1)^j\binom{n-r}{j}h^0(X,L^{\otimes(a(n-r-j)+1)}). $$
Then $T_{\infty}^r(X,f)$ is finite and
$$ \sum_{V\in T_{\infty}^r(X,f)}\deg_L(V)\leq\frac{B_{X,L,a,r}}{a^{n-r}}, $$
where $\deg_L(V):=(L^{n-r}\cdot V)$. Consequently,
$|T_{\infty}^r(X,f)|\leq\lfloor B_{X,L,a,r}/a^{n-r}\rfloor$.
\end{theorem}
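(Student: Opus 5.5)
Let $Z:=\bigcup_{V\in T_\infty^r(X,f)}V$ be the union of the totally periodic codimension-$r$ subvarieties. It is a reduced closed subscheme of pure dimension $m:=n-r$, and it is finite by \cite[Proposition~3.6]{MZ20}. Since $Z$ is finite, replacing $f$ by a suitable iterate $f^b$ makes every component totally invariant; $f^b$ is still int-amplified and $\deg f^b$ is still invertible in $k$. The plan has two steps. First, I show that $Z$ imposes few conditions on sections of $L$, namely that
$$h^0(X,L\otimes I_Z)\ \geq\ \text{something large}.$$
Second, I convert this into a degree bound through a Hilbert-function estimate. The constant $B_{X,L,a,r}$ is an $m$-th finite difference in steps of $a$ of $t\mapsto h^0(X,L^{\otimes(t+1)})$. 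This strongly suggests cutting $X$ by $m$ general members of $|L^{\otimes a}|$.

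The dynamical input comes from total invariance. Since $f^{-1}(Z)=Z$ and $f$ is finite and surjective (int-amplified maps are), $f^*I_Z\subseteq I_Z$, so pullback of sections preserves the subspace of sections vanishing on $Z$. Ramification along $Z$ gives more: near a generic point of a component $V$ with ramification index $e_V$ of $f$, one has $f^*I_Z\subseteq I_Z^{(e)}$. Int-amplifiedness forces the total ramification to be nontrivial along the invariant locus, by the ramification-divisor argument of Meng--Zhang. Iterating then gives vanishing to arbitrarily high order.

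I would rather use the following cleaner route, which should yield exactly the stated bound. Take $H_1,\dots,H_m\in|L^{\otimes a}|$ general, so that $C=H_1\cap\dots\cap H_m$ meets $Z$ transversally in $\deg_{L^a}Z=a^m\deg_L Z$ reduced points. A Koszul complex on the $H_i$ tensored with $L$ shows that
$$h^0(C,L|_C)\le \sum_j(-1)^j\binom{m}{j}h^0(X,L^{\otimes(a(m-j)+1)})=B_{X,L,a,r},$$
up to higher-cohomology terms. Those terms I would kill by twisting with a large power of $L$; this is where the exact shape of $B$ comes from. It then suffices to show that the points $Z\cap C$ impose independent conditions on $H^0(C,L|_C)$, or at least that their number is at most $h^0(C,L|_C)$. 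For this I would use that $\deg f$ is invertible. The trace map $\frac{1}{\deg f}\Tr_f$ gives a splitting $\mathcal O_X\to f_*\mathcal O_X\to\mathcal O_X$ compatible with $I_Z$, because $Z$ is totally invariant. Combined with $f^*L\otimes L^{-1}$ ample (after replacing $L$ by a power), a Frobenius-splitting-style argument then gives surjectivity of $H^0(X,L)\to H^0(Z,L|_Z)$ after pushing forward along iterates. Concretely, the obstruction $H^1(X,I_Z\otimes L)$ embeds, via the splitting, into $H^1(X,I_Z\otimes (f^s)^*L)$, and that group vanishes for $s\gg0$ by Serre vanishing, since $(f^s)^*L$ grows amply. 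The same argument applies on $C$ after choosing the $H_i$ suitably, or one argues directly on $X$ with the Koszul resolution of $\mathcal O_C$.

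The main obstacle is this splitting and vanishing step. Two things need care. The trace splitting must respect $I_Z$, which requires checking that $\Tr_f(f_*I_Z)\subseteq I_Z$; this should follow from $f^{-1}(Z)=Z$ and smoothness, since $f$ is flat. The ampleness of $(f^s)^*L\otimes L^{-1}$ must also be made to dominate uniformly. Once surjectivity $H^0(C,L)\twoheadrightarrow H^0(Z\cap C,L)$ holds, the count gives $a^m\deg_L Z\le B_{X,L,a,r}$, which is the claim. The cardinality bound follows because each $\deg_L(V)\ge1$.
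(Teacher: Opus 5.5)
\textbf{There is a genuine gap.} Your concluding step, the surjectivity of $H^0(C,L|_C)\to H^0(Z\cap C,L|_{Z\cap C})$, is false in general. Take the coordinate power map on $\PP^n$ with $n\geq2$, $r=1$, $L=\OO_{\PP^n}(1)$ and $a=1$. Then $Z$ is the union of the $n+1$ coordinate hyperplanes and $C$ is a general line, so $|Z\cap C|=n+1>2=h^0(C,\OO_C(1))$. Concretely, $H^1(\PP^n,\OO(-n))=0$, but $H^1(\PP^1,\OO(-n))\neq0$.

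The vanishing $H^1(X,\mathcal I_Z\otimes L)=0$ that trace splitting gives on $X$ does not pass to $C$ in the same twist. You also cannot rerun the trace argument on $C$, since $f$ does not preserve $C$.

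What does transfer is weaker. Set $X_j=H_1\cap\dots\cap H_j$ and $Z_j=Z\cap X_j$, and choose each $H_{j+1}$ to be a nonzerodivisor on both $\OO_{X_j}$ and $\OO_{Z_j}$. The sequences $0\to\mathcal I_j\otimes L^{\otimes(t-a)}\to\mathcal I_j\otimes L^{\otimes t}\to\mathcal I_{j+1}\otimes L^{\otimes t}\to0$ then show inductively that $H^i(X_j,\mathcal I_j\otimes L^{\otimes t})=0$ for $i>0$ only when $t>aj$. Each cut uses up one copy of $L^{\otimes a}$. So on $C=X_m$ you get surjectivity only in twist $am+1$, which gives $a^m\deg_L Z=\ell(Z\cap C)\leq h^0(C,L^{\otimes(am+1)}|_C)$.

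Your Koszul computation has the wrong twist for the same reason. Tensoring with $L$ produces terms $L^{\otimes(1-aj)}$. Tensoring with $L^{\otimes(am+1)}$ produces $(L^{\otimes(a(m-j)+1)})^{\oplus\binom mj}$, and hence $h^0(C,L^{\otimes(am+1)}|_C)=B_{X,L,a,r}$ exactly. That is where the shape of $B$ comes from, not from ``twisting by a large power,'' which would change the exponents. The vanishing $H^i(X,L^{\otimes s})=0$ needed here, for all $s>0$ and $i>0$, must itself come from the dynamical argument with $Z=\varnothing$. In positive characteristic Kodaira vanishing is unavailable, and Serre vanishing only covers large $s$.

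A second, smaller gap is in the vanishing step itself. $H^1(X,\mathcal I_Z\otimes(f^s)^*L)$ does not vanish ``by Serre vanishing,'' because $(f^s)^*L$ is not a growing power of a fixed ample bundle. You need the uniform control you flagged. Pass to $h=f^b$ with $h^*H-(1+\epsilon)H$ ample, and write $(h^s)^*L^{\otimes t}\simeq L^{\otimes\mu_s}\otimes\OO_X(P_s)$ with $\mu_s\to\infty$ and $P_s$ ample. Then apply Fujita's uniform vanishing to $\mathcal I_Z\otimes L^{\otimes\mu}\otimes N$ with $N$ nef.
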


In particular, after fixing $L$ and $a$ on $X$, the bound is uniform over all
int-amplified endomorphisms of degree invertible in $k$.

In codimension one over $\mathbb C$, Zhang proved that a polarized
endomorphism $f$ of a $\Q$-factorial klt projective $n$-fold with
$\deg(f)=q^n>1$ has at most $n+\rho(X)$ $f^{-1}$-stable prime divisors when
either $n\leq3$ or $f^*|_{N^1(X)}=q\,\mathrm{id}$
\cite[Theorem~1.3(1)]{Zha14}. Zhong later established the corresponding
bound for int-amplified endomorphisms \cite[Theorem~1.1]{Zho21}. Compared
with these results, the cardinality bound furnished by
Theorem~\ref{thm: general smooth bound} is generally less sharp in
codimension one; see Remark~\ref{rem: invariant divisors}. Its advantages
are that it controls the total $L$-degree and applies in every codimension.

When $r=n$, the sum defining
$B_{X,L,a,r}$ has only one term, so $B_{X,L,a,n}=h^0(X,L)$.
Hence Theorem~\ref{thm: general smooth bound} gives
$|T_\infty(f)|\leq h^0(X,L)$ for every ample line bundle $L$ on $X$.
The same vanishing argument yields the following stronger statement.

\begin{theorem}\label{thm: trace bound}
Let $X$ be a smooth projective variety over an algebraically closed field
$k$, and let $f:X\to X$ be an int-amplified endomorphism such that $\deg(f)$
is invertible in $k$. For every ample line bundle $L$ on $X$, the evaluation
map
$$ H^0(X,L)\to\bigoplus_{x\in T_\infty(f)}L|_x $$
is surjective. In particular,
$$ |T_\infty(f)|\leq\min\{h^0(X,L):L\in\mathrm{Pic}(X),\ L\text{ is ample}\}. $$
\end{theorem}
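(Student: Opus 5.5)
We have to show that $H^0(X,L)\to\bigoplus_{x\in T_\infty(f)}L|_x$ is onto. By Meng--Zhang \cite[Proposition~3.6]{MZ20}, $T_\infty(f)$ is finite, so after replacing $f$ by an iterate we may assume $f^{-1}(x)=\{x\}$ for every $x\in T_\infty(f)$. Put $Z=T_\infty(f)$ with its reduced structure and let $\mathcal I_Z$ be its ideal sheaf. From $0\to\mathcal I_Z\otimes L\to L\to L|_Z\to 0$, surjectivity follows once $H^1(X,\mathcal I_Z\otimes L)=0$. The strategy is to obtain this vanishing from the dynamics: the pullback by $f$ should embed this cohomology group into a group that vanishes by Serre vanishing.

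Step 1 (trace splitting). Since $X$ is smooth, $f$ is finite and surjective, hence flat by miracle flatness. Because $\deg f$ is invertible in $k$, the normalized trace $\frac{1}{\deg f}\Tr:f_*\OO_X\to\OO_X$ splits $\OO_X\to f_*\OO_X$. We want this splitting to respect ideals: $\Tr$ should send $f_*(f^{-1}\mathcal I_Z\cdot\OO_X)$ into $\mathcal I_Z$. This holds because $f^{-1}(Z)=Z$ set-theoretically, so the pulled-back ideal sits inside the radical ideal $\mathcal I_Z$, and the trace of a function vanishing on $f^{-1}(x)=\{x\}$ vanishes at $x$. (Locally, the trace of a nilpotent-mod-$\mathfrak m_x$ element is zero modulo $\mathfrak m_x$.) Consequently $\mathcal I_Z$ is a direct summand of $f_*\mathcal I_Z$, since $\OO\to f_*\OO$ carries $\mathcal I_Z$ into $f_*\mathcal I_Z$ and the trace carries it back. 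Tensoring with $L$ and using the projection formula, $\mathcal I_Z\otimes L$ is a direct summand of $f_*(\mathcal I_Z\otimes f^*L)$. Since $f$ is finite, we obtain an injection $H^1(X,\mathcal I_Z\otimes L)\hookrightarrow H^1(X,\mathcal I_Z\otimes f^*L)$.

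Step 2 (iteration). Apply Step 1 to $f^m$; the hypotheses are preserved. This gives an injection $H^1(\mathcal I_Z\otimes L)\hookrightarrow H^1(\mathcal I_Z\otimes (f^m)^*L)$. Since $f$ is int-amplified, Meng's results \cite{Men20} show that for any ample $L$ the bundles $(f^m)^*L$ become arbitrarily positive. Concretely, every eigenvalue of $f^*$ on $N^1(X)$ has modulus greater than $1$, so $(f^m)^*L-A$ is ample for large $m$, for any fixed divisor $A$. Write $(f^m)^*L=A\otimes N_m$ with $N_m$ nef and $A$ a fixed sufficiently ample bundle. Fujita vanishing then gives $H^1(X,\mathcal I_Z\otimes A\otimes N)=0$ for all nef $N$, and this holds in any characteristic. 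Hence the target group is $0$, and the claimed surjectivity follows.

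The main obstacle is Step 1. One must check carefully that the trace maps $f_*\mathcal I_Z$ into $\mathcal I_Z$ at points of $Z$, which requires $f^{-1}(x)=\{x\}$ exactly. One must also avoid confusing $f^*\mathcal I_Z$ with $\mathcal I_Z$; we only need the inclusion $\mathcal I_Z\subseteq f_*\mathcal I_Z$, which comes from $f^{-1}(Z)\subseteq Z$. The final inequality is then immediate: surjectivity onto a space of dimension $|T_\infty(f)|$ forces $|T_\infty(f)|\leq h^0(X,L)$, and we minimize over ample $L$.
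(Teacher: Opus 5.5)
Your argument is correct and is essentially the paper's proof. The paper gets $H^1(X,\mathcal I_{T_\infty(f)}\otimes L)=0$ from Proposition~\ref{prop: invariant ideal vanishing} with $t=1$, which combines exactly your trace splitting (Lemma~\ref{lem: ideal trace splitting}) with Fujita vanishing on high iterates; your decomposition $(f^m)^*L\simeq A\otimes N_m$ is a slightly more direct form of its $L^{\otimes\mu_s}\otimes\OO_X(P_s)$ bookkeeping.

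Two small corrections. First, the Meng--Zhang finiteness result you cite is a characteristic-zero statement (compare Remark~\ref{rem: characteristic hypothesis}). Over general $k$, run your argument on an arbitrary finite $S\subseteq T_\infty(f)$: it gives $|S|\leq h^0(X,L)$ and hence finiteness, as the paper does via Theorem~\ref{thm: general smooth bound}. Second, the inclusion $\mathcal I_Z\subseteq f_*\mathcal I_Z$ uses $Z\subseteq f^{-1}(Z)$, whereas it is the trace step that needs $f^{-1}(Z)\subseteq Z$; this is harmless because you have equality.
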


One of the main ingredients in the proof of
Theorem~\ref{thm: general smooth bound} is a trace splitting for the ideal
sheaf of a reduced union of totally-periodic subvarieties. Combined with
uniform Fujita vanishing, this splitting implies that the ideal sheaf has no
higher cohomology after tensoring by $L^{\otimes t}$ for any $t>0$. Successive
members of $|L^{\otimes a}|$ then reduce the degree estimate to a finite
difference of the Hilbert function. For $\PP^n$ with
$L=\OO_{\PP^n}(1)$, this difference is $\binom{n+1}{r}$. This use of
normalized trace is in the same spirit as Kawakami--Totaro's proof of Bott
vanishing for normal projective varieties admitting int-amplified
endomorphisms of degree invertible in the ground field
\cite[Theorem~C]{KT25}.

\medskip

We next specialize to projective space, where degrees are computed with
respect to $\OO_{\PP^n}(1)$. Theorem~\ref{thm: general smooth bound} gives a
sharp binomial bound in every codimension.

\begin{theorem}\label{thm: projective space bound}
Let $n\geq1$ be an integer and $f:\PP^n\to\PP^n$ an endomorphism over the
algebraically closed field $k$ satisfying
$f^*\OO_{\PP^n}(1)\simeq\OO_{\PP^n}(q)$ for an integer $q>1$ that is
invertible in $k$. For $1\leq r\leq n$, the set $T_{\infty}^r(f)$ is finite and
$$ \sum_{V\in T_{\infty}^r(f)}\deg(V)\leq\binom{n+1}{r}. $$
Consequently, $|T_{\infty}^r(f)|\leq\binom{n+1}{r}$. Both bounds are sharp. In particular, $|T_\infty(f)|\leq n+1$, and the points of $T_\infty(f)$ are projectively linearly independent.
\end{theorem}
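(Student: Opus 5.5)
Most of the statement follows by specializing Theorem~\ref{thm: general smooth bound} to $X=\PP^n$, $L=\OO_{\PP^n}(1)$, $a=1$. The remaining work is computing the constant, checking sharpness, and proving the linear independence of the points.

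First, $f$ is int-amplified: $f^*\OO(1)\otimes\OO(-1)\simeq\OO(q-1)$ is ample since $q>1$. Also $\deg(f)=q^n$ is invertible in $k$ because $q$ is. So Theorem~\ref{thm: general smooth bound} applies. With $m=n-r$ and $h^0(\PP^n,\OO(d))=\binom{n+d}{n}$ we get
$$B=\sum_{j=0}^{m}(-1)^j\binom{m}{j}\binom{n+m-j+1}{n}.$$
This is the $m$-th backward finite difference of $P(t)=\binom{n+t}{n}$, evaluated at $t=m+1$. Using the identity $\Delta\binom{n+t}{n}=\binom{n+t-1}{n-1}$ repeatedly $m$ times gives
$$\binom{n-m+(m+1)}{n-m}=\binom{n+1}{n-m}=\binom{n+1}{r}.$$
Since $a=1$, the degree bound is exactly $\binom{n+1}{r}$. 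The cardinality bound follows because each $\deg(V)\ge1$.

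For sharpness, take the power map $f=[x_0^q:\dots:x_n^q]$ with $q$ invertible in $k$. For each $r$-element subset $S\subseteq\{0,\dots,n\}$, the coordinate linear subspace $\{x_i=0,\ i\in S\}$ satisfies $f^{-1}=$ itself set-theoretically. This gives $\binom{n+1}{r}$ totally invariant prime cycles of codimension $r$, each of degree $1$, so both inequalities are equalities.

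For the last assertion, take $r=n$, which gives $|T_\infty(f)|\le n+1$. Linear independence comes from Theorem~\ref{thm: trace bound} with $L=\OO(1)$. The evaluation map $H^0(\PP^n,\OO(1))\to\bigoplus_{x\in T_\infty(f)}k$ is surjective. Choose lifts $v_x\in k^{n+1}$ of the points. If the points were linearly dependent, there would be a relation $\sum c_xv_x=0$ with not all $c_x$ zero. Every linear form $\ell$ would then satisfy $\sum c_x\ell(v_x)=0$, so the image of the evaluation map would lie in a proper hyperplane, contradicting surjectivity.

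The main obstacle is the binomial identity for $B$. I would prove it by induction on $m$ via Pascal's rule, or through generating functions: $\sum_d\binom{n+d}{n}z^d=(1-z)^{-n-1}$, and multiplying by $(1-z)^m$ yields the coefficient $\binom{n-m+m+1}{n-m}$ of $z^{m+1}$ in $(1-z)^{-(n-m)-1}$.
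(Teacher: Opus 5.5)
Your proposal is correct and is essentially the paper's proof. Like the paper, you specialize Theorem~\ref{thm: general smooth bound} to $L=\OO_{\PP^n}(1)$, $a=1$, get sharpness from the coordinate power map, and derive linear independence from the surjectivity of linear forms onto $T_\infty(f)$. The paper gets that surjectivity from Proposition~\ref{prop: invariant ideal vanishing} directly, which is the content of Theorem~\ref{thm: trace bound}.

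The one difference is how the constant is computed. You evaluate it as an iterated finite difference of $\binom{n+t}{n}$. The paper instead reads it off as $h^0(\PP^r,\OO_{\PP^r}(m+1))=\binom{n+1}{r}$, since the complete intersection of $m$ hyperplanes in the proof of Theorem~\ref{thm: general smooth bound} is $\PP^r$.
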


For $n=2$, Theorem~\ref{thm: projective space bound} answers Favre's question affirmatively, even for points that become totally invariant only after an iterate.

A similar method (see Lemma~\ref{lem: ideal trace splitting}) also controls \emph{completely ramified} points (points $p$ such that $f^{-1}(p)$ is a single point) without any
periodicity assumption. If $q\geq n+1$,
Proposition~\ref{prop: completely ramified points} gives the sharp bound of
$n+1$ and shows that these points are projectively linearly independent. In
particular, for endomorphisms of $\PP^2$ of algebraic degree at least three,
it improves the bound of Amerik--Campana \cite[Theorem~1]{AC05} from nine to
the sharp bound of three.

\medskip

In the toric setting, we have:

\begin{theorem}\label{thm: toric bound}
Let $X=X_\Sigma$ be an $n$-dimensional smooth projective toric variety
with dense torus $T$ and character lattice $M$, and let $D$ be an ample Cartier divisor with associated lattice polytope
$P_D\subseteq M_{\mathbb R}$. Let $f:X\to X$ be an int-amplified endomorphism
such that $\deg(f)$ is invertible in $k$. For every $1\leq r\leq n$, set
$m:=n-r$. Then
$$ \sum_{V\in T_{\infty}^r(X,f)}\deg_D(V)\leq\sum_{j=0}^m(-1)^j\binom{m}{j}|(m+1-j)P_D\cap M|. $$
In particular, $|T_\infty(f)|\leq \min\{|P_D\cap M|\big|D \text{ is an ample Cartier divisor}\}$.
\end{theorem}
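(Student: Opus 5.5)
The plan is to deduce the toric statement from Theorem~\ref{thm: general smooth bound} and Theorem~\ref{thm: trace bound} by computing the Hilbert-function terms combinatorially. On a smooth projective toric variety every ample Cartier divisor is very ample; this is the standard fact that ample implies very ample on smooth complete toric varieties, since the lattice polytope of an ample divisor is then normal. We may therefore take $L=\OO_X(D)$ and $a=1$ in Theorem~\ref{thm: general smooth bound}, which avoids any rescaling by $a^{n-r}$. With $m=n-r$ the bound reads
$$ \sum_{V\in T_\infty^r(X,f)}\deg_D(V)\leq B_{X,L,1,r}=\sum_{j=0}^{m}(-1)^j\binom{m}{j}h^0(X,\OO_X((m-j+1)D)). $$

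The remaining step is the identification $h^0(X,\OO_X(tD))=|tP_D\cap M|$ for every integer $t\geq1$. By the standard toric description of global sections, $H^0(X,\OO_X(tD))$ has a basis of characters $\chi^u$ with $u$ running over the lattice points of $P_{tD}$. Since $tD$ is again a torus-invariant Cartier divisor, and $P_{tD}=tP_D$ directly from the defining inequalities $\langle u,v_\rho\rangle\geq -t a_\rho$, we get $h^0=|tP_D\cap M|$. Substituting $t=m+1-j$ yields exactly the displayed sum.

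For the last assertion, apply Theorem~\ref{thm: trace bound} with $L=\OO_X(D)$ for each ample Cartier divisor $D$. This gives $|T_\infty(f)|\leq h^0(X,\OO_X(D))=|P_D\cap M|$, and taking the minimum over $D$ finishes the proof. Equivalently, the case $r=n$, $m=0$ of the sum specializes to $|P_D\cap M|$, and since $\deg_D$ of a point is $1$ the inequality on total degree bounds the cardinality.

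The only place where some care is needed is the reduction to $a=1$. I would cite the very ampleness of ample divisors on smooth projective toric varieties. If one preferred to avoid it, one could keep a general $a$ and check that the formula still comes out right. I expect that to be awkward, so the citation is the route I would take. Everything else is bookkeeping. Note that a torus-invariant representative of $D$ exists because $\mathrm{Pic}$ of a toric variety is generated by $T$-invariant divisors, and changing $D$ within its linear equivalence class only translates $P_D$ by a lattice vector, which does not change the lattice-point counts.
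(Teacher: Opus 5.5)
Your proposal is correct and follows the paper's proof: the very ampleness of $D$ \cite[Theorem~6.1.15]{CLS11} lets you take $L=\OO_X(D)$ and $a=1$ in Theorem~\ref{thm: general smooth bound}, and the identification $h^0(X,\OO_X(tD))=|tP_D\cap M|$ turns $B_{X,L,1,r}$ into the lattice-point sum. One correction to your parenthetical: ample implies very ample here because smooth lattice polytopes are \emph{very ample} polytopes (at each vertex the primitive edge vectors form a basis of $M$), not because they are normal, since normality of smooth polytopes is Oda's still-open conjecture.
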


Geometrically, this minimum for $|P_D\cap M|$ is the least number of lattice points in a lattice polytope with normal fan $\Sigma$.

\medskip

Finally, let $X$ be an $n$-dimensional projective variety and
$f:X\to X$ a finite surjective endomorphism such that
$T_{\infty}^r(X,f)$ is finite for every $1\leq r\leq n$. The
\emph{totally-periodic polynomial} of $(X,f)$ is defined as
$$ \Phi_{X,f}(t):=1+\sum_{r=1}^n|T_{\infty}^r(X,f)|t^r+t^{n+1}. $$
The constant term represents $X$, while $t^{n+1}$ is included by convention.
Thus $\Phi_{X,f}(1)-2$ is the total number of proper
totally-periodic integral closed subvarieties of $X$. For
$P(t)=\sum_i a_it^i$ and $Q(t)=\sum_i b_it^i$ in $\mathbb Z[t]$, write
$P\preceq Q$ if $a_i\leq b_i$ for every $i$, or equivalently, if every coefficient of $Q-P$ is nonnegative. Thus $\preceq$ denotes coefficientwise comparison, not pointwise comparison after evaluating at $t$. We prove some fundamental properties for totally-periodic polynomials.

\begin{proposition}\label{prop: polynomial functoriality}
The totally-periodic polynomial has the following properties.
\begin{enumerate}[label=\textup{(\arabic*)},leftmargin=2.5em]
\item For every integer $m\geq1$,
$ \Phi_{X,f^m}(t)=\Phi_{X,f}(t)$.
\item Let $f,g:X\to X$ be finite surjective endomorphisms. If either
$\Phi_{X,f\circ g}$ or $\Phi_{X,g\circ f}$ is defined, then both are
defined and
$ \Phi_{X,f\circ g}(t)=\Phi_{X,g\circ f}(t)$.
\item Let $X$ and $Y$ be smooth projective varieties of dimensions $n$
and $m$, respectively, and $f:X\to X$ and $g:Y\to Y$ int-amplified
endomorphisms whose degrees are invertible in $k$. Then
$$ t^{n+m+1}+(\Phi_{X,f}(t)-t^{n+1})(\Phi_{Y,g}(t)-t^{m+1})\preceq\Phi_{X\times Y,f\times g}(t). $$
\item Let $f:\PP^n\to\PP^n$ satisfy
$f^*\OO_{\PP^n}(1)\simeq\OO_{\PP^n}(q)$ for an integer $q>1$ that is
invertible in $k$. Then
$$ 1+t^{n+1}\preceq\Phi_{\PP^n,f}(t)\preceq(1+t)^{n+1}. $$
Equality on the left holds if and only if
$f$ admits no nonempty proper totally-periodic integral closed subvariety. In characteristic zero, equality on the left holds for a general degree-$q$ endomorphism. Equality on the right holds if and only if, for some $a\geq1$, the iterate $f^a$ is projectively conjugate to the coordinate power map
$P_{q^a}:[x_0:\cdots:x_n]\mapsto[x_0^{q^a}:\cdots:x_n^{q^a}]$.
\end{enumerate}
\end{proposition}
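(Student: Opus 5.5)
We take the four parts in order; each reduces to comparing the sets $T^r_\infty$ directly, together with the earlier theorems.

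For (1), note that $V$ is totally periodic for $f^m$ exactly when it is totally periodic for $f$. If $(f^{mb})^{-1}(V)=V$, then $V$ is $f$-totally periodic with period $mb$. Conversely, if $(f^b)^{-1}(V)=V$, then $(f^{bm})^{-1}(V)=V$, which is $(f^m)^b$. Hence $T^r_\infty(X,f^m)=T^r_\infty(X,f)$ for all $r$, and the polynomials agree.

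For (2), we build a bijection between $T^r_\infty(X,f\circ g)$ and $T^r_\infty(X,g\circ f)$. Suppose $((f\circ g)^b)^{-1}(V)=V$. Put $W:=g(V)$. Since $g$ is finite and surjective, $W$ is an integral closed subvariety of the same codimension. We claim $((g\circ f)^b)^{-1}(W)=W$. Write $(f\circ g)^b=f\circ(g\circ f)^{b-1}\circ g$. Set-theoretically, $V=g^{-1}\bigl(((g\circ f)^{b-1})^{-1}(f^{-1}(V))\bigr)$, and $g$ is surjective, so $g(V)=((g\circ f)^{b-1})^{-1}(f^{-1}(V))$. It follows that $(g\circ f)^{-1}(g(V))$ contains $f^{-1}(V)$; an equality check is then needed. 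The cleaner route is the following. Set $U:=f^{-1}(V)$. Then $(g\circ f)^{-b}(U)=f^{-1}\bigl((f\circ g)^{-b}(V)\bigr)=U$. The set $U$ may be reducible, but it is totally periodic and of pure codimension $r$, since $f$ is finite. Its irreducible components are permuted by preimage up to a further iterate, so each component is totally periodic for $(g\circ f)$ after passing to an iterate. Meanwhile $f(U)=V$. We therefore obtain maps $V\mapsto$ (component of $f^{-1}(V)$) in one direction and $W\mapsto g(W)$ in the other. We show these are mutually inverse: for a totally periodic $V$, the preimage $f^{-1}(V)$ must in fact be irreducible. Indeed, $V=(f\circ g)^{-b}(V)=g^{-1}(\ldots f^{-1}(V))$, and the image under the surjective map $f\circ g\circ\cdots$ of each component is all of $V$. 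A cleaner cardinality argument avoids this bookkeeping. The maps $V\mapsto g(V)$ and $W\mapsto f(W)$ are injective on totally periodic sets, because $V=(f\circ g)^{-b}(V)$ is recovered from $f(g(V))$ via preimages. Their composites are $(f\circ g)$ and $(g\circ f)$, which act as permutations of the finite sets. Hence both maps are bijections preserving codimension. Finiteness transfers along these injections, so if one polynomial is defined, so is the other.

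For (3), given $V\in T^r_\infty(X,f)$ and $W\in T^s_\infty(Y,g)$, the product $V\times W$ is integral over an algebraically closed field. It satisfies $((f\times g)^{bc})^{-1}(V\times W)=V\times W$, with codimension $r+s$. The same holds when one factor is the whole space, that is, $r=0$ or $s=0$. Distinct pairs give distinct products. Expanding $(\Phi_{X,f}-t^{n+1})(\Phi_{Y,g}-t^{m+1})$ counts exactly these pairs in codimension $r+s\le n+m$, with constant term $1$. The top coefficient $t^{n+m}$ counts products of points, whereas $t^{n+m+1}$ only appears by convention. This gives $\preceq$, with finiteness of $T_\infty(X\times Y)$ supplied by Theorem~\ref{thm: general smooth bound}, since $f\times g$ is int-amplified with degree $\deg f\deg g$ invertible.

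For (4), the upper bound is Theorem~\ref{thm: projective space bound}, and the lower bound is trivial. Left equality is a tautology. For general $f$ in characteristic zero, we cite Dinh--Sibony genericity, or construct an explicit example with no totally invariant points and then use openness. Right equality is the main obstacle. First, $f^a$ conjugate to $P_{q^a}$ gives the coordinate flats, which yields $\binom{n+1}{r}$ per codimension; by (1) this is enough. For the converse, equality at $r=1$ forces $n+1$ totally periodic hyperplanes, because the degree sum equals $n+1$ with $n+1$ components. Equality at $r=n$ gives $n+1$ points in linearly independent position. The hyperplanes are in general position: if they were not, their intersections would produce too few codimension-$r$ flats, and each such flat is itself totally periodic. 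Choose coordinates so that the hyperplanes are $\{x_i=0\}$, and pass to an iterate $f^a$ fixing each of them. Then $(f^a)^{-1}\{x_i=0\}=\{x_i=0\}$ means $f^a_i=c_ix_i^{q^a}$. Rescaling the coordinates conjugates $f^a$ to $P_{q^a}$. The hardest part is this converse, in particular ensuring that the $n+1$ hyperplanes are in general position; we would derive this from linear independence of the $n+1$ totally periodic points.
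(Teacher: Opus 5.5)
The genuine gap is the step you flag yourself in the converse of right-hand equality: showing that the $n+1$ totally periodic hyperplanes $H_i=(\ell_i=0)$ are linearly independent. Your claim that dependent hyperplanes ``would produce too few codimension-$r$ flats'' gives no contradiction. Equality $|T^r_\infty(f)|=\binom{n+1}{r}$ only forces the members of $T^r_\infty(f)$ to be linear, not to be intersections of the $H_i$, so other totally periodic flats can fill up the count. Linear independence of the totally periodic points does not help by itself either: in $\PP^2$, three totally periodic lines concurrent at one of three independent totally periodic points is compatible with every coefficient count. The missing idea is algebraic, and it is where invertibility of $Q:=q^a$ enters. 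Your own observation, applied before choosing coordinates, gives $\ell_i\circ\Psi=c_i\ell_i^Q$ with $c_i\in k^*$ for a homogeneous lift $\Psi$ of $f^a$. Take a minimal dependent subset, written $\ell_s=\sum_{j<s}d_j\ell_j$ with $\ell_0,\dots,\ell_{s-1}$ independent, all $d_j\neq0$, and $s\geq2$; the last condition holds because distinct hyperplanes are pairwise non-proportional. Composing with $\Psi$ gives $c_s\bigl(\sum_{j<s}d_j\ell_j\bigr)^Q=\sum_{j<s}d_jc_j\ell_j^Q$. In coordinates extending $\ell_0,\dots,\ell_{s-1}$, the left side contains $\ell_i^{Q-1}\ell_j$ ($i\neq j$) with coefficient $c_sQd_i^{Q-1}d_j\neq0$, while the right side consists only of pure $Q$-th powers. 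Once independence is established, your coordinate change and rescaling finish the argument as in the paper.

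The claim that left equality holds for a general $f$ in characteristic zero is also only gestured at. The ``Dinh--Sibony genericity'' citation is not pinned down, and it is a statement over $\mathbb C$, whereas $k$ is an arbitrary algebraically closed field of characteristic zero. The fallback ``explicit example plus openness'' fails as stated, for two reasons. First, an example without totally periodic \emph{points} says nothing about positive-dimensional totally periodic subvarieties. Second, openness is not automatic: a priori the bad locus is a countable union over all degrees $e$ and periods $a$. The paper makes this union finite by bounding $e\leq\binom{n+1}{r}$ (Theorem~\ref{thm: projective space bound}) and $a\leq\binom{n+1}{r}$, the latter because reduced inverse image permutes $T^r_\infty(f)$ by (2) with the second map the identity. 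It then writes each piece as the image in $M$ of the closed locus $\{(f,Z):(f^a)^*Z=q^{ar}Z\}\subseteq M\times\mathrm{Chow}_{n-r,e}(\PP^n)$. Properness comes from \cite[Theorem~1.2]{Fak14} for $r<n$ and from the explicit map $\gamma\circ P_q$ for $r=n$.

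The rest is fine. Parts (1), (3) and the two bounds in (4) match the paper. Your two-injection count in (2), using $V\mapsto g(V)$ and $W\mapsto f(W)$ (injective since $V=F^{-1}(F(V))$ for $F:=f\circ g$), is a valid alternative to the paper's explicit bijection $V\mapsto f^{-1}(V)$ with inverse $W\mapsto f(W)$. You do need to close the check you left open: with $G:=g\circ f$ and $U:=f^{-1}(V)$, your identities $g(V)=G^{-(b-1)}(U)$ and $G^{-b}(U)=U$ give $G^{-b}(g(V))=g(V)$.
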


The right-hand equality in Proposition~\ref{prop: polynomial functoriality}(4)
requires equality in every coefficient; maximality of the point coefficient
alone does not characterize the coordinate power map. Indeed, Amerik--Campana \cite[p.~742]{AC05} consider, among other examples, the endomorphism
$f([x:y:z])=[x^2:y^2:z^2+xy]$ of $\PP^2$. Its three coordinate points are
totally invariant, and hence $|T_\infty(f)|=3$. Nevertheless, $f$ is not
projectively conjugate to $P_2$.

A different rigidity question asks whether the full binomial identity
characterizes projective space when the underlying variety is allowed to
vary. Remark~\ref{rem: Hirzebruch counterexample} gives a negative answer by
constructing, over $\mathbb C$, a polarized endomorphism
$g:\mathbf F_1\to\mathbf F_1$ such that
$\Phi_{\mathbf F_1,g}(t)=(1+t)^3$, although
$\mathbf F_1\not\simeq\PP^2$.

\vspace{2mm}

{\bf Acknowledgements.}
W. Chang thanks his advisor Meng Chen for his constant encouragement and
support. The authors thank Professor Sheng Meng for helpful discussions and
Professors Junyi Xie, De-Qi Zhang, and Guolei Zhong for helpful comments. 
W. Chang is supported by National Natural Science Foundation of China (NSFC 12121001).
\medskip

\section{Preliminaries}

We adopt the standard notation as in \cite{Ful}, \cite{Har}, \cite{Laz} and \cite{Mum74}.

\begin{lemma}\label{lem: ideal trace splitting}
Let $h:X\to X$ be a finite locally free endomorphism of a projective scheme over $k$, of rank $d$, where $d$ is invertible in $k$. Suppose that $Z\subseteq X$ is a reduced closed subscheme such that
$(h^{-1}(Z))_{\mathrm{red}}=Z$. Then the normalized trace restricts to a splitting
$$ \mathcal I_Z\to h_*\mathcal I_Z\xrightarrow{d^{-1}\Tr_h}\mathcal I_Z. $$
Consequently, for every line bundle $A$ on $X$ and every $i\geq0$,
$H^i(X,\mathcal I_Z\otimes A)$ is a direct summand of $H^i(X,\mathcal I_Z\otimes h^*A)$.
\end{lemma}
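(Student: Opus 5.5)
The plan is to realize both maps in the displayed composite as restrictions of the standard pair $\mathcal O_X\to h_*\mathcal O_X\xrightarrow{d^{-1}\Tr_h}\mathcal O_X$, whose composite is the identity. Since $h$ is finite locally free of rank $d$, the trace $\Tr_h:h_*\mathcal O_X\to\mathcal O_X$ is defined locally as the trace of multiplication on the free module $h_*\mathcal O_X|_U\simeq\mathcal O_U^{\oplus d}$. For a local section $s$ of $\mathcal O_X$ we have $\Tr_h(h^\#s)=ds$, so $d^{-1}\Tr_h\circ h^\#=\mathrm{id}$; here we use that $d$ is invertible in $k$. The first map sends $\mathcal I_Z$ into $h_*\mathcal I_Z$. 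Indeed, $h(h^{-1}(Z))\subseteq Z$, so the pullback of a function vanishing on $Z$ vanishes on $h^{-1}(Z)\supseteq Z$. More precisely, $h^\#(\mathcal I_Z)\subseteq \mathcal I_{h^{-1}(Z)}\subseteq \mathcal I_Z$, because $Z\subseteq h^{-1}(Z)$ set-theoretically and $Z$ is reduced.

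The main step is to show that $\Tr_h$ maps $h_*\mathcal I_Z$ into $\mathcal I_Z$. Since $Z$ is reduced, $\mathcal I_Z$ consists of the functions vanishing at every point of $Z$. So it suffices to check, at each point $z\in Z$, that $\Tr_h(g)(z)=0$ whenever $g$ is a section of $\mathcal I_Z$ near $h^{-1}(z)$. We use that the trace commutes with base change to the residue field. This gives $\Tr_h(g)(z)=\Tr_{A/\kappa(z)}(\bar g)$, where $A=(h_*\mathcal O_X)\otimes\kappa(z)$ is the fiber algebra, which is finite of dimension $d$ over $\kappa(z)$. Every point of $h^{-1}(z)$ lies in $h^{-1}(Z)$, whose reduction is $Z$, so $g$ vanishes at each point of $\operatorname{Spec}A$. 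Hence $\bar g$ lies in every prime of the Artinian ring $A$, so $\bar g$ lies in the nilradical and is nilpotent. A nilpotent endomorphism has trace zero, so $\Tr_h(g)(z)=0$.

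We need this at all points of $Z$, not only closed points. For that we either repeat the same argument at scheme-theoretic points, or note that $\Tr_h(g)$ vanishes at all closed points of $Z$ and that $Z$ is Jacobson, being of finite type over $k$. This fiberwise nilpotence argument is the step I expect to need the most care. The hypothesis $(h^{-1}(Z))_{\mathrm{red}}=Z$ is used exactly there: it ensures that $h^{-1}(z)\subseteq Z$.

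Once the restricted maps $\mathcal I_Z\to h_*\mathcal I_Z\to\mathcal I_Z$ exist, they compose to the identity, since they are restrictions of the split pair on $\mathcal O_X$. For the cohomology statement we tensor with $A$. By the projection formula, $h_*\mathcal I_Z\otimes A\simeq h_*(\mathcal I_Z\otimes h^*A)$, using that $h_*\mathcal I_Z$ means $h_*$ of $\mathcal I_Z$ viewed on the source and that $A$ is locally free. So $\mathcal I_Z\otimes A$ is a direct summand of $h_*(\mathcal I_Z\otimes h^*A)$. Because $h$ is finite, hence affine, $H^i(X,h_*\mathcal F)=H^i(X,\mathcal F)$. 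Therefore $H^i(X,\mathcal I_Z\otimes A)$ is a direct summand of $H^i(X,\mathcal I_Z\otimes h^*A)$.
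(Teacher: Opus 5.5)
Your proposal is correct and follows essentially the same argument as the paper: $\Tr$ commutes with base change, the hypothesis $(h^{-1}(Z))_{\mathrm{red}}=Z$ makes the image of $g$ nilpotent in each fiber algebra so its trace vanishes, and reducedness of $Z$ then gives $\Tr_h(h_*\mathcal I_Z)\subseteq\mathcal I_Z$, followed by the projection formula and affineness of $h$. The only cosmetic difference is that you base change directly to the residue fields at points of $Z$. The paper instead first reduces to $B/IB$ over $A_0/I$, where the image of $J=\sqrt{IB}$ is nilpotent, and then passes to residue fields of the reduced ring $A_0/I$.
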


\begin{proof}
The assertion is local on the target. Write $U=\mathrm{Spec}(A_0)$ and
$h^{-1}(U)=\mathrm{Spec}(B)$, and let $I\subseteq A_0$ and $J\subseteq B$ be the ideals defining $Z\cap U$ and $Z\cap h^{-1}(U)$, respectively. The hypothesis gives $J=\sqrt{IB}$. If $b\in J$, its image $\overline b$ in $B/IB$ is nilpotent. Since trace commutes with base change \cite[Tag~0BT8]{Stacks},
$$ \Tr_{B/A_0}(b)\bmod I=\Tr_{(B/IB)/(A_0/I)}(\overline b). $$
The right-hand side vanishes. Indeed, after passing to any residue field of $A_0/I$, it is the trace of a nilpotent linear map, and $A_0/I$ is reduced. Thus $\Tr_{B/A_0}(J)\subseteq I$.

The pullback map sends $I$ into $J$, and its composition with $\Tr_{B/A_0}$ is multiplication by $d$. Dividing by $d$ gives the asserted splitting. Twisting by $A$, applying the projection formula, and using that a finite morphism is affine give the cohomological statement.
\end{proof}

\begin{lemma}\label{lem: eventual amplification}
Let $f:X\to X$ be an int-amplified endomorphism of a projective variety and
$L$ an ample line bundle on $X$. Then
$(f^s)^*L\otimes L^{-1}$ is ample for all sufficiently large integers $s$.
\end{lemma}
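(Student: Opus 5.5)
By definition, $f$ int-amplified means there is an ample line bundle $M$ with $f^*M\otimes M^{-1}$ ample. I would pass to real (or rational) classes in $N^1(X)_{\mathbb R}$ and use the standard characterization, due to Meng: $f$ is int-amplified if and only if every eigenvalue of $f^*|_{N^1(X)}$ has modulus strictly greater than $1$. Assuming this fact, the argument is linear algebra on the ample cone.

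Write $\varphi=f^*|_{N^1(X)_{\mathbb R}}$. Since every eigenvalue of $\varphi$ has modulus $>1$, the inverse $\varphi^{-1}$ has spectral radius $<1$. Hence $\varphi^{-s}\to 0$ as $s\to\infty$.

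Now take the class $\ell$ of $L$. We have
$$(f^s)^*L-L=\varphi^s(\ell-\varphi^{-s}\ell).$$
As $s\to\infty$, $\ell-\varphi^{-s}\ell\to \ell$. The class $\ell$ lies in the ample cone, which is open, so $\ell-\varphi^{-s}\ell$ is ample for $s\gg0$.

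It remains to see that $\varphi^s$ preserves ample classes. Pullback by the finite surjective morphism $f^s$ preserves ampleness of line bundles, and it also preserves ampleness of $\mathbb R$-classes, since such a class is a positive combination of ample integral classes. Therefore $(f^s)^*L-L$ is ample for all large $s$.

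The main point to be careful about is that $\ell-\varphi^{-s}\ell$ is an $\mathbb R$-class, so one must make sure ampleness is preserved under pullback for $\mathbb R$-classes. This follows from openness of the ample cone in $N^1$ together with the positive-combination description above.

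If one prefers to avoid citing the eigenvalue criterion, there is a more elementary route. Take $H=f^*M-M$, which is ample. Then $(f^s)^*M-M=\sum_{i<s}(f^i)^*H$. Next choose $c$ with $cH-L$ ample and $cM-L$ ample, and compare $(f^s)^*L$ with $L$ through $M$. Making this comparison work still needs growth of $(f^i)^*H$ relative to $L$, which again amounts to the eigenvalue statement. So I would simply invoke Meng's characterization.
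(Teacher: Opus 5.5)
Your proposal is correct and follows the paper's proof essentially step by step: Meng's eigenvalue criterion, the decay $\varphi^{-s}\to 0$, openness of the ample cone, and pullback by the finite surjective map $f^s$ (finite because $f^*M$ is ample). The paper adds one caveat you should state explicitly, since the paper works over an arbitrary algebraically closed field: Meng's criterion is stated in characteristic zero, but its proof is purely cone-theoretic and works in any characteristic.
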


\begin{proof}
Choose an ample line bundle $M$ such that
$f^*M\otimes M^{-1}$ is ample. Then $f^*M$ is ample, so $f$ is finite and
surjective.

Let $T:=f^*|_{N^1(X)_{\mathbb R}}$. Although the cone-theoretic eigenvalue
criterion for int-amplified endomorphisms
\cite[Theorem~1.1 and Remark~1.2]{Men20} is stated in characteristic zero,
its proof is purely cone-theoretic and works over any algebraically closed
field. Hence every eigenvalue of $T$ has modulus greater than one.

Set $S:=T^{-1}$. Every eigenvalue of $S$ has modulus less than one, and hence
$S^s\to0$ on $N^1(X)_{\mathbb R}$. In particular, $S^s[L]\to 0$.
Since the ample cone is open, $[L]-S^s[L]$ is ample for all sufficiently
large $s$. Pulling this class back by the finite morphism $f^s$ gives
$$ (f^s)^*[L]-[L]=T^s([L]-S^s[L]), $$
which is ample. Since ampleness is numerical, the assertion follows.
\end{proof}

\begin{proposition}\label{prop: invariant ideal vanishing}
Let $X$ be a smooth projective variety of dimension $n$, $L$ an ample line
bundle on $X$, and $f:X\to X$ an int-amplified endomorphism such that
$\deg(f)$ is invertible in $k$. Suppose that $Z\subseteq X$ is a reduced
closed subscheme such that
$$ \bigl((f^b)^{-1}(Z)\bigr)_{\mathrm{red}}=Z $$
for some $b\geq1$. Then
$$ H^i(X,\mathcal I_Z\otimes L^{\otimes t})=0 $$
for every $i>0$ and every integer $t>0$.
\end{proposition}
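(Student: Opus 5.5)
The plan is to combine the trace splitting of Lemma~\ref{lem: ideal trace splitting} with Fujita vanishing, after replacing $f$ by a suitable iterate. Put $h:=f^{bs}$ for a large $s$ to be chosen. Since $X$ is smooth and $h$ is finite and surjective, miracle flatness shows $h$ is flat, hence finite locally free. Its rank is $\deg(f)^{bs}$, which is invertible in $k$. The hypothesis $((f^b)^{-1}(Z))_{\mathrm{red}}=Z$ iterates to $(h^{-1}(Z))_{\mathrm{red}}=Z$, because preimages commute with taking supports. Lemma~\ref{lem: ideal trace splitting} then shows that $H^i(X,\mathcal I_Z\otimes L^{\otimes t})$ is a direct summand of $H^i(X,\mathcal I_Z\otimes h^*L^{\otimes t})$ for each $i$. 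Iterating the same lemma with powers $h^m$ (these are again finite locally free of invertible rank and satisfy the same hypothesis) shows it is also a direct summand of $H^i(X,\mathcal I_Z\otimes (h^m)^*L^{\otimes t})$ for every $m\geq1$.

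The key step is to show that the right-hand groups vanish for large $m$. Fujita vanishing applied to the coherent sheaf $\mathcal I_Z$ gives an integer $N$ such that $H^i(X,\mathcal I_Z\otimes L^{\otimes N}\otimes P)=0$ for all $i>0$ and every nef line bundle $P$. So it suffices to write $(h^m)^*L^{\otimes t}=L^{\otimes N}\otimes P$ with $P$ nef, which means showing that $(h^m)^*L^{\otimes t}\otimes L^{-N}$ is nef (indeed ample) for $m$ large.

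For this positivity I will use Lemma~\ref{lem: eventual amplification}. Choose $s$ so that $(f^{bs})^*L\otimes L^{-1}$ is ample; then $h^*L-L$ is ample. Write $h^*L=L+A$ with $A$ ample. Pulling back by the finite morphism $h$ preserves ampleness, so induction gives
$$ (h^m)^*L=L+A+h^*A+\cdots+(h^{m-1})^*A, $$
a sum of $m$ ample classes after $L$. Hence $(h^m)^*L-mA'$ is nef for a fixed ample class $A'$. A cleaner route is to note that $h^*$ on $N^1$ has all eigenvalues of modulus $>1$, exactly as in the proof of Lemma~\ref{lem: eventual amplification}. Then $(h^m)^*[L]/\|\cdot\|$ grows, and for $t\geq1$, $t(h^m)^*[L]-N[L]=t(h^m)^*\bigl([L]-\tfrac{N}{t}(h^{-m})^*[L]\bigr)$. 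Since $(h^{-m})^*[L]\to0$ in $N^1(X)_{\mathbb R}$ and the ample cone is open, this class is ample for $m\gg0$. This mirrors the proof of Lemma~\ref{lem: eventual amplification}, with $[L]$ replaced by $[L]-\tfrac Nt S^m[L]$. With such an $m$ the target group vanishes, and therefore so does its summand $H^i(X,\mathcal I_Z\otimes L^{\otimes t})$.

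The main obstacle is bookkeeping rather than depth. The flatness of $h$ is where smoothness is used, and $t>0$ is needed for positivity. Note that $t$ is fixed before $m$ is chosen, so no uniformity in $t$ is required. Fujita vanishing holds in arbitrary characteristic (Fujita's theorem), so the argument goes through over any algebraically closed field.
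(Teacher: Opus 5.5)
Your proof is correct and follows the paper's route: apply the trace splitting of Lemma~\ref{lem: ideal trace splitting} to a high iterate, then kill the larger group with Fujita's uniform vanishing for $\mathcal I_Z$, once $(h^m)^*L^{\otimes t}$ is a power of $L$ beyond the Fujita threshold twisted by a nef bundle. The only cosmetic difference is how that positivity is obtained. The paper chooses $\epsilon$ with $h^*H-(1+\epsilon)H$ ample and writes $(h^s)^*L^{\otimes t}\simeq L^{\otimes\mu_s}\otimes\OO_X(P_s)$ with $\mu_s\to\infty$, whereas you fix the threshold $N$ and get ampleness of $t(h^m)^*[L]-N[L]$ from the eigenvalue argument of Lemma~\ref{lem: eventual amplification}.
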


\begin{proof}
By Lemma~\ref{lem: eventual amplification}, after replacing $f$ by a positive
power, we may assume that $f^*L\otimes L^{-1}$ is ample. This replacement
preserves the condition on $Z$ and the invertibility of $\deg(f)$.
The line bundle $f^*L$ is ample, so $f$ has no positive-dimensional fiber and is finite and surjective. Since $X$ is smooth, miracle flatness \cite[Tag~00R4]{Stacks} makes $f$ finite locally free of rank $d:=\deg(f)$.

Set $h:=f^b$ and write $L\simeq\OO_X(H)$. The divisor $h^*H-H$ is a sum of pullbacks of the ample divisor $f^*H-H$, and is therefore ample. Choose $\epsilon\in\Q_{>0}$ such that $h^*H-(1+\epsilon)H$ is ample. Inductively,
$$ (h^s)^*H\equiv(1+\epsilon)^sH+B_s $$
for an ample $\Q$-divisor $B_s$.

Fix $t>0$, and set $\mu_s:=\lfloor t(1+\epsilon)^s\rfloor$ and
$P_s:=t(h^s)^*H-\mu_sH$. Since
$$ P_s\equiv tB_s+(t(1+\epsilon)^s-\mu_s)H, $$
$P_s$ is an ample Cartier divisor, $\mu_s\to\infty$, and $(h^s)^*L^{\otimes t}\simeq L^{\otimes\mu_s}\otimes\OO_X(P_s)$. Fujita's uniform vanishing theorem
\cite[Theorem~(1)]{Fuj83} (see also \cite[Theorem~1.5]{Kee03}) says that, for
the fixed coherent sheaf $\mathcal I_Z$ and ample line bundle $L$, there is
an integer $m_0$ such that
$$ H^i(X,\mathcal I_Z\otimes L^{\otimes m}\otimes N)=0 $$
for every $i>0$, every $m\geq m_0$, and every nef line bundle $N$ on $X$.
Since $\mu_s\to\infty$ and $\OO_X(P_s)$ is ample, hence nef, taking
$m=\mu_s$ and $N=\OO_X(P_s)$ gives
$$ H^i(X,\mathcal I_Z\otimes(h^s)^*L^{\otimes t})=0 $$
for every $i>0$ and all sufficiently large $s$.

The rank $d^{bs}$ of $h^s$ is invertible in $k$. By
Lemma~\ref{lem: ideal trace splitting},
$H^i(X,\mathcal I_Z\otimes L^{\otimes t})$ is a direct summand of
$H^i(X,\mathcal I_Z\otimes(h^s)^*L^{\otimes t})$, which vanishes by the
preceding argument. Hence the former group also vanishes.
\end{proof}

\begin{remark}\label{rem: numerical polarization}
The hypotheses concerning $f$ in Proposition~\ref{prop: invariant ideal vanishing}
hold in particular when $H$ is an ample Cartier divisor such that
$f^*H\equiv qH$ for an integer $q>1$ that is invertible in $k$. Indeed, $f$
is int-amplified and the intersection formula gives $\deg(f)=q^n$.
\end{remark}

\section{Proofs of the main results}

\subsection{The general bound}

\begin{proof}[Proof of Theorem~\ref{thm: general smooth bound}]
Put $m:=n-r$, $A:=L^{\otimes a}$, and
$\deg_A(V):=(A^m\cdot V)$. 
Fix a nonempty finite subset $\mathcal S\subseteq T_{\infty}^r(X,f)$, and let
$Z:=(\bigcup_{V\in\mathcal S}V)_{\mathrm{red}}$. A common iterate of $f$
makes $Z$ totally invariant, so Proposition~\ref{prop: invariant ideal vanishing}
applies. Since the members of $\mathcal S$ are distinct and equidimensional,
$$ (A^m\cdot Z)=\sum_{V\in\mathcal S}\deg_A(V). $$

Set $X_0:=X$ and $Z_0:=Z$. For $0\leq j<m$, choose
$H_{j+1}\in|A|$ successively so that its defining section acts as a
nonzerodivisor on both $\OO_{X_j}$ and $\OO_{Z_j}$, where
$$ X_j:=H_1\cap\cdots\cap H_j,\qquad Z_j:=Z\cap X_j, $$
with all intersections taken scheme-theoretically. At the $j$-th step, it
is enough to choose $H_{j+1}$ containing no point of
$\mathrm{Ass}(\OO_{X_j})\cup\mathrm{Ass}(\OO_{Z_j})$. This finite set can
be avoided because $A$ is very ample and $k$ is infinite.

Write $L_j:=L|_{X_j}$. Denote by $\mathcal I_j$ the ideal sheaf of $Z_j$ in
$X_j$ and by $\iota_{j+1}:X_{j+1}\hookrightarrow X_j$ the inclusion.
Abbreviate $\mathcal I_j(t):=\mathcal I_j\otimes L_j^{\otimes t}$. The
choice of $H_{j+1}$ gives
$$ 0\to\mathcal I_j(t-a)\to\mathcal I_j(t)\to(\iota_{j+1})_*\mathcal I_{j+1}(t)\to0. $$
Proposition~\ref{prop: invariant ideal vanishing} and the associated long
exact sequences show inductively that
$$ H^i(X_j,\mathcal I_j(t))=0 $$
for every $i>0$ and every integer $t>aj$.

Set $W:=X_m$ and $Y:=Z_m$. The chosen sections form a regular sequence on
$Z$, so $Y$ is zero-dimensional and
$$ \ell(Y)=(A^m\cdot Z)=\sum_{V\in\mathcal S}\deg_A(V). $$
Taking $t=am+1$ in the preceding vanishing and using the exact sequence
$$ 0\to\mathcal I_m(am+1)\to L_m^{\otimes(am+1)}\to L_m^{\otimes(am+1)}|_Y\to0 $$
shows that
$$ H^0(W,L_m^{\otimes(am+1)})\to H^0(Y,L_m^{\otimes(am+1)}|_Y) $$
is surjective. Since $Y$ is zero-dimensional and
$L_m^{\otimes(am+1)}|_Y$ is invertible, the target has dimension
$\ell(Y)$. Hence
$$ \sum_{V\in\mathcal S}\deg_A(V)=\ell(Y)\leq h^0(W,L_m^{\otimes(am+1)}). $$

Proposition~\ref{prop: invariant ideal vanishing}, applied to the empty subscheme, gives $H^i(X,L^{\otimes t})=0$ for $i>0$ and $t>0$. Applying the same restriction argument to the structure sheaves of the $X_j$ gives $H^i(X_j,L_j^{\otimes t})=0$ for $i>0$ and $t>aj$.

Set $t:=am+1$. Assume first that $m>0$, and let $\iota:W\hookrightarrow X$
be the inclusion. The successive nonzero-divisor condition on the $X_j$
shows that the sections defining $H_1,\ldots,H_m$ form an
$\OO_X$-regular sequence. Set $E:=(A^{-1})^{\oplus m}$. Twisting the
resulting Koszul resolution of $\iota_*\OO_W$ by $L^{\otimes t}$ gives the
exact sequence
$$ 0\to L^{\otimes t}\otimes\wedge^mE\to\cdots\to L^{\otimes t}\otimes E\to L^{\otimes t}\to\iota_*L_m^{\otimes t}\to0. $$
For $0\leq j\leq m$, its $j$-th term is
$$ L^{\otimes t}\otimes\wedge^jE\simeq(L^{\otimes(t-aj)})^{\oplus\binom{m}{j}}=(L^{\otimes(a(m-j)+1)})^{\oplus\binom{m}{j}}. $$
Additivity of Euler characteristic gives the identity below when $m>0$; if
$m=0$, it is immediate because $W=X$. Thus
$$ \chi(W,L_m^{\otimes(am+1)})=\sum_{j=0}^m(-1)^j\binom{m}{j}\chi(X,L^{\otimes(a(m-j)+1)}). $$
Since $t-aj=a(m-j)+1>0$ for every $j$ and $t=am+1>am$, the relevant higher
cohomology groups on both sides vanish. Therefore
$$ h^0(W,L_m^{\otimes(am+1)})=\sum_{j=0}^m(-1)^j\binom{m}{j}h^0(X,L^{\otimes(a(m-j)+1)})=B_{X,L,a,r}. $$
Combining the preceding two displays gives
$$ \sum_{V\in\mathcal S}\deg_A(V)\leq B_{X,L,a,r}. $$

This estimate holds for every finite subset $\mathcal S$. Since
$\deg_A(V)=a^m\deg_L(V)\geq a^m$ for every member, $T_{\infty}^r(X,f)$ is
finite. Applying the estimate to the whole set proves both assertions.
\end{proof}

\begin{proof}[Proof of Theorem~\ref{thm: trace bound}]
Set $S:=T_\infty(f)$, which is finite by
Theorem~\ref{thm: general smooth bound}. If $S=\varnothing$, the assertion is
clear. Otherwise, a common iterate of $f$ makes $S$ totally invariant.
Proposition~\ref{prop: invariant ideal vanishing}, with $Z=S$ and $t=1$,
gives $H^1(X,\mathcal I_S\otimes L)=0$. Therefore
$$ H^0(X,L)\to\bigoplus_{x\in S}L|_x $$
is surjective, and $|S|\leq h^0(X,L)$.
\end{proof}

\begin{remark}\label{rem: invariant divisors}
In codimension one over $\mathbb C$, a polarization-independent cardinality
bound is available. Theorem~\ref{thm: general smooth bound} first makes
$T_{\infty}^1(X,f)$ finite. Choose a common iterate $g$ for which all of its
members are $g^{-1}$-stable. The variety $X$ is $\Q$-factorial and klt, while
$g$ remains int-amplified, so \cite[Theorem~1.1]{Zho21} gives
$$ |T_{\infty}^1(X,f)|\leq n+\rho(X). $$
Here $\rho(X)$ is the Picard number. For example, let
$X=(\PP^1)^n$ and $f=\mu_q^{\times n}$, where
$\mu_q([x:y])=[x^q:y^q]$ for some $q\geq2$. The $2n$ coordinate boundary
divisors belong to $T_{\infty}^1(X,f)$. Since $\rho(X)=n$, Zhong's bound is
attained. By contrast, for $L=\OO_X(1,\ldots,1)$ and $a=1$, we have
$h^0(X,L^{\otimes t})=(t+1)^n$, so Theorem~\ref{thm: general smooth bound}
gives
$$ \sum_{V\in T_{\infty}^1(X,f)}\deg_L(V)\leq B_{X,L,1,1}=\sum_{j=0}^{n-1}(-1)^j\binom{n-1}{j}(n-j+1)^n=\frac{n!(n+3)}{2}. $$
Moreover, if a prime divisor has class $\OO_X(d_1,\ldots,d_n)$, then
$d_i\geq0$ for every $i$, not all $d_i$ vanish, and its $L$-degree is
$(n-1)!\sum_i d_i$. Thus $|T_{\infty}^1(X,f)|\leq n(n+3)/2$, which is weaker
than $2n$ for $n\geq2$. On the other hand,
Theorem~\ref{thm: general smooth bound} controls the sum of $L$-degrees and
applies in every codimension.
\end{remark}

\subsection{Projective space and toric varieties}

\begin{proof}[Proof of Theorem~\ref{thm: projective space bound}]
Apply Theorem~\ref{thm: general smooth bound} with
$X=\PP^n$, $L=\OO_{\PP^n}(1)$, and $a=1$. Here a complete intersection of
$m=n-r$ members of $|L|$ is $\PP^r$, so the proof of that theorem identifies
the constant with
$$ B_{\PP^n,\OO_{\PP^n}(1),1,r}=h^0(\PP^r,\OO_{\PP^r}(m+1))=\binom{n+1}{r}.$$

For $r=n$, after taking a common iterate,
Proposition~\ref{prop: invariant ideal vanishing}, applied to the reduced union
$T_\infty(f)$, gives
$H^1(\PP^n,\mathcal I_{T_\infty(f)}(1))=0$. Thus linear forms restrict
surjectively to $T_\infty(f)$, which is equivalent to projective linear
independence.

For sharpness, consider the coordinate power map $P_q$. Its coordinate linear
subspaces
$$ \{X_{i_1}=\cdots=X_{i_r}=0\},\qquad 0\leq i_1<\cdots<i_r\leq n, $$
are totally invariant and have degree one. There are
$\binom{n+1}{r}$ of them.
\end{proof}

\begin{remark}\label{rem: characteristic hypothesis}
The assumption that $q$ is invertible is essential. When
$k=\overline{\mathbb F}_p$, the coordinate $p$-power map has infinitely many
totally-periodic linear subspaces of every positive codimension: each such
subspace is defined over a finite field and becomes totally invariant under a
suitable iterate.
\end{remark}

\begin{proof}[Proof of Theorem~\ref{thm: toric bound}]
The divisor $D$ is very ample by \cite[Theorem~6.1.15]{CLS11}. Apply
Theorem~\ref{thm: general smooth bound} with $L=\OO_X(D)$ and $a=1$. By
\cite[Proposition~4.3.3]{CLS11},
$$ h^0(X,\OO_X(tD))=|tP_D\cap M| $$
for every $t>0$, which proves the degree bound and its point-case
consequence.
\end{proof}

\section{The totally-periodic polynomial}\label{sec: totally-periodic polynomial}

\begin{proof}[Proof of Proposition~\ref{prop: polynomial functoriality}]
For~(1), a subvariety is totally-periodic under $f^m$ if and only if it is
totally-periodic under $f$. 

For~(2), set $F:=f\circ g$ and $G:=g\circ f$. Throughout this paragraph,
all inverse images and images are set-theoretic and carry the reduced induced
structure. We show that inverse image under $f$ gives a bijection from
$T_{\infty}^r(X,F)$ to $T_{\infty}^r(X,G)$.

Let $V\in T_{\infty}^r(X,F)$ and choose $a\geq1$ such that
$F^{-a}(V)=V$. Since $f\circ G=F\circ f$, we have
$$ G^{-a}(f^{-1}(V))=f^{-1}(F^{-a}(V))=f^{-1}(V). $$
Moreover, $F^a=f\circ G^{a-1}\circ g$, so
$(G^{a-1}\circ g)^{-1}(f^{-1}(V))=V$. Since $G^{a-1}\circ g$ is finite and
surjective,
$$ f^{-1}(V)=(G^{a-1}\circ g)(V), $$
which is irreducible of codimension $r$. Thus
$f^{-1}(V)\in T_{\infty}^r(X,G)$.

Conversely, let $W\in T_{\infty}^r(X,G)$ and choose $a\geq1$ such that
$G^{-a}(W)=W$. The identity $G^a=g\circ F^{a-1}\circ f$ rewrites this as
$$ W=f^{-1}((g\circ F^{a-1})^{-1}(W)) $$
and, after applying $f$, gives
$f(W)=(g\circ F^{a-1})^{-1}(W)$, and hence
$f^{-1}(f(W))=W$. Moreover,
$$ f^{-1}(F^{-a}(f(W)))
=G^{-a}(f^{-1}(f(W)))=W=f^{-1}(f(W)). $$
The surjectivity of $f$ gives $F^{-a}(f(W))=f(W)$. Since $f$ is finite,
$f(W)$ is irreducible of codimension $r$. Thus $W\mapsto f(W)$ is the
inverse bijection.

For~(3), choose ample line bundles $L_X$ and $L_Y$ such that $f^*L_X\otimes L_X^{-1}$ and $g^*L_Y\otimes L_Y^{-1}$ are ample, respectively. The exterior tensor product $L_X\boxtimes L_Y$ is ample, and
$$ (f\times g)^*(L_X\boxtimes L_Y)\otimes(L_X\boxtimes L_Y)^{-1}\simeq(f^*L_X\otimes L_X^{-1})\boxtimes(g^*L_Y\otimes L_Y^{-1}) $$
is ample. Moreover, $\deg(f\times g)=\deg(f)\deg(g)$ is invertible in $k$, so Theorem~\ref{thm: general smooth bound} applies to $f\times g$. For $V\in T_{\infty}^i(X,f)$ and $W\in T_{\infty}^j(Y,g)$, where $1\leq i\leq n$ and $1\leq j\leq m$, a common iterate satisfies
$$ \bigl((f\times g)^a\bigr)^{-1}(V\times W)=V\times W. $$
The same holds when $i=0$ and $V=X$, or when $j=0$ and $W=Y$. The product
$V\times W$ is integral of codimension $i+j$, and its two projections recover
$V$ and $W$. Hence distinct pairs give distinct totally-periodic
subvarieties, and comparison of coefficients proves the inequality.

For~(4), the lower bound follows from the nonnegativity of the coefficients,
and the upper bound follows from
Theorem~\ref{thm: projective space bound}. If a coefficient reaches its
upper bound, the corresponding
degree bound in Theorem~\ref{thm: projective space bound} forces every member
to have degree one, and hence to be linear. The characterization of equality
on the left follows directly from the definition of $\Phi_{\PP^n,f}$.

Suppose that equality holds on the right. Then $T_{\infty}^1(f)$ consists of
$n+1$ hyperplanes, say $H_i=(\ell_i=0)$ for $0\leq i\leq n$. The bijection
established in~(2), applied with the second endomorphism equal to the identity,
shows that reduced inverse image permutes these hyperplanes. Choose $a\geq1$
such that $g:=f^a$ fixes every $H_i$, and put $Q:=q^a$. For a homogeneous
lift $G$ of $g$, set-theoretic invariance yields constants $b_i\in k^*$
satisfying
$$ \ell_i\circ G=b_i\ell_i^Q. $$

We claim that $\ell_0,\ldots,\ell_n$ are linearly independent. Otherwise,
after relabeling a minimally dependent subset, write
$\ell_s=\sum_{j=0}^{s-1}d_j\ell_j$, where the $\ell_j$ on the right are
linearly independent and at least two of the $d_j$ are nonzero. Pulling back
this relation gives
$$ b_s\left(\sum_{j=0}^{s-1}d_j\ell_j\right)^Q=\sum_{j=0}^{s-1}d_jb_j\ell_j^Q. $$
The linear forms on the right may be completed to a coordinate system. For
$d_id_j\neq0$ and $i\neq j$, the coefficient of
$\ell_i^{Q-1}\ell_j$ on the left is $b_sQd_i^{Q-1}d_j\neq0$, since $Q$ is
invertible in $k$, whereas the right-hand side contains only pure $Q$-th
powers. This is a contradiction.

Thus, taking $x_i=\ell_i$, we have
$$ g([x_0:\cdots:x_n])=[b_0x_0^Q:\cdots:b_nx_n^Q]. $$
Choose $\alpha_i\in k^*$ such that $\alpha_i^{Q-1}=b_i^{-1}$, and let
$A([x_0:\cdots:x_n]):=[\alpha_0x_0:\cdots:\alpha_nx_n]$. Then
$A^{-1}gA=P_Q$ (the coordinate power map).

Conversely, if some $f^a$ is projectively conjugate to $P_{q^a}$, then the
conjugates of the coordinate linear subspaces of $P_{q^a}$ attain every
coefficient of the upper bound. Projective conjugacy preserves total
periodicity, and~(1) gives
$\Phi_{\PP^n,f}(t)=\Phi_{\PP^n,f^a}(t)=(1+t)^{n+1}$.

Finally, assume that $\mathrm{char}(k)=0$, and let
$M:=\mathrm{End}_q(\PP^n)$ be the parameter space of degree-$q$
endomorphisms. By Theorem~\ref{thm: projective space bound},
$\deg(V)\leq\binom{n+1}{r}$ for every $V\in T_{\infty}^r(f)$. Moreover,~(2)
shows that reduced inverse image permutes $T_{\infty}^r(f)$. Hence, for every
such $V$, there is an integer $a$ with
$1\leq a\leq\binom{n+1}{r}$ such that $(f^a)^{-1}(V)=V$.

Fix integers $r,e,a$ such that $1\leq r\leq n$ and
$1\leq e,a\leq\binom{n+1}{r}$, and let
$\mathrm{Chow}_{n-r,e}(\PP^n)$ be Koll\'ar's projective seminormal Chow
scheme of effective $(n-r)$-cycles of degree $e$
\cite[Theorem~I.3.21]{Kol96}. Since $M$ is smooth,
$M\times\mathrm{Chow}_{n-r,e}(\PP^n)$ is seminormal. The universal $a$-th
iterate
$$ M\times\PP^n\to M\times\PP^n,\qquad(f,x)\mapsto(f,f^a(x)), $$
is projective with finite fibers, hence finite, and is flat by miracle
flatness. Flat pullback of families of cycles commutes with base change
\cite[Tag~0H4P]{Stacks}. Pulling back the universal cycle after base change
to $M\times\mathrm{Chow}_{n-r,e}(\PP^n)$ therefore gives a family of
effective cycles. By the representing property of this Chow scheme, the
family defines a morphism
$$ \tau_{r,e,a}:M\times\mathrm{Chow}_{n-r,e}(\PP^n)
\to\mathrm{Chow}_{n-r,q^{ar}e}(\PP^n),\qquad
(f,Z)\mapsto(f^a)^*Z. $$
The map $(f,Z)\mapsto q^{ar}Z$ also defines a morphism to the same Chow
scheme. Since the target Chow scheme is separated, the locus
$$ \mathcal C_{r,e,a}:=\{(f,Z):(f^a)^*Z=q^{ar}Z\} $$
where these two morphisms agree is closed. Its projection to $M$ is closed
because $\mathrm{Chow}_{n-r,e}(\PP^n)$ is projective.
Here the displayed condition is an equality of effective cycles. If
$V\in T_{\infty}^r(f)$ has degree $e$ and satisfies
$(f^a)^{-1}(V)=V$, then $(f^a)^*V$ is supported on $V$, and comparison of
degrees gives $(f^a)^*V=q^{ar}V$. Thus $(f,V)\in\mathcal C_{r,e,a}$.

For $r<n$, the image of $\mathcal C_{r,e,a}$ is proper. Indeed, an effective
cycle $Z$ satisfying the displayed equality has
$(f^a)^{-1}(\mathrm{Supp}(Z))=\mathrm{Supp}(Z)$. Put
$W:=\mathrm{Supp}(Z)$. Then $f^a|_W:W\to W$ is finite and surjective and
hence permutes the irreducible components of $W$. After a further iterate,
this permutation is the identity. Since inverse images under a finite
morphism are pure-dimensional, that iterate fixes every component under
reduced inverse image. As $r<n$, these components are positive-dimensional
proper totally-periodic subvarieties. This cannot occur for the generic
endomorphism by \cite[Theorem~1.2]{Fak14}.

For $r=n$, properness can be seen explicitly. Let
$E:=\{e_0,\ldots,e_n\}$ be the set of coordinate vertices, choose
$\gamma\in\mathrm{PGL}_{n+1}(k)$ such that $\gamma(E)\cap E=\varnothing$,
and set $h:=\gamma\circ P_q$, where $P_q$ is the coordinate power map. For a
point $p\in\PP^n$, the set-theoretic inverse image $h^{-1}(p)$ consists of a
single point precisely when $p\in\gamma(E)$, and its unique preimage then
belongs to $E$. If
$(h^b)^{-1}(p)=\{p\}$ for some $b\geq1$, then $p\in\gamma(E)$. For $b=1$,
its unique preimage would also be $p\in E$. If $b>1$, let $p_1$ be the
unique point of $h^{-1}(p)$. Then $p_1\in E$, while
$(h^{b-1})^{-1}(p_1)$ consists of a single point. In particular,
$h^{-1}(p_1)$ consists of a single point, so $p_1\in\gamma(E)$. Both cases contradict
$\gamma(E)\cap E=\varnothing$. Thus $h$ has no totally-periodic point. If
an effective zero-cycle $Z$ satisfied
$(h^a)^*Z=q^{an}Z$, then $h^a$ would permute its finite support, and a
further iterate would fix each point under reduced inverse image, a
contradiction. Hence the images of all $\mathcal C_{n,e,a}$ are proper.

There are only finitely many triples $(r,e,a)$ with $1\leq r\leq n$ and
$1\leq e,a\leq\binom{n+1}{r}$. Hence only finitely many proper closed
subsets of the irreducible parameter space $M$ occur. Their complement is
a nonempty Zariski open subset on which
$\Phi_{\PP^n,f}(t)=1+t^{n+1}$.
\end{proof}

\begin{proposition}\label{prop: polynomial coefficient bounds}
Let $X$ be a smooth projective variety of dimension $n$, $L$ an ample line
bundle on $X$, and $f:X\to X$ an int-amplified endomorphism such that
$\deg(f)$ is invertible in $k$. Fix $a\geq1$ such that $L^{\otimes a}$ is
very ample. Then
$$ 1+t^{n+1}\preceq\Phi_{X,f}(t)\preceq1+\sum_{r=1}^n\left\lfloor\frac{B_{X,L,a,r}}{a^{n-r}}\right\rfloor t^r+t^{n+1}. $$
\end{proposition}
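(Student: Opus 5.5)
This is essentially a coefficientwise restatement of Theorem~\ref{thm: general smooth bound}, so the plan is to compare the coefficients of $t^0,\dots,t^{n+1}$ one at a time.

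First, $\Phi_{X,f}$ has to be defined. For this we need $T_{\infty}^r(X,f)$ to be finite for every $1\leq r\leq n$. Under the stated hypotheses ($X$ smooth, $f$ int-amplified, $\deg(f)$ invertible in $k$) this finiteness is exactly the first assertion of Theorem~\ref{thm: general smooth bound}. Lemma~\ref{lem: eventual amplification} already shows that $f$ is finite and surjective, as the definition of the polynomial requires.

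Next, the coefficients of $t^0$ and $t^{n+1}$ equal $1$ in all three polynomials, by the definition of $\Phi_{X,f}$. So only the coefficients of $t^r$ for $1\leq r\leq n$ need to be compared.

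For such $r$, the coefficient of $t^r$ in $\Phi_{X,f}$ is $|T_{\infty}^r(X,f)|\geq0$, which gives the left inequality $1+t^{n+1}\preceq\Phi_{X,f}(t)$. For the right inequality, Theorem~\ref{thm: general smooth bound} applies with the same $L$ and $a$. It gives $|T_{\infty}^r(X,f)|\leq\lfloor B_{X,L,a,r}/a^{n-r}\rfloor$, because each $V$ satisfies $\deg_L(V)\geq1$: it is a positive-dimensional or zero-dimensional integral subvariety meeting an ample class positively. This is precisely the coefficientwise comparison with the right-hand polynomial.

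I do not expect a genuine obstacle; all the substance is in Theorem~\ref{thm: general smooth bound}. The one point to state explicitly is that the floor is harmless, since the cardinality is an integer bounded above by the real number $B_{X,L,a,r}/a^{n-r}$.
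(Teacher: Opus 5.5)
Your proposal is correct and follows the paper's own route: the lower bound comes from nonnegativity of the coefficients $|T_{\infty}^r(X,f)|$, and the upper bound from the cardinality consequence of Theorem~\ref{thm: general smooth bound}, applied with the same $L$ and $a$. Your extra remarks are harmless additions: that $\Phi_{X,f}$ is well defined because $f$ is finite and surjective with each $T_{\infty}^r(X,f)$ finite, and that $\deg_L(V)\geq1$ justifies the floor.
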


\begin{proof}[Proof of Proposition~\ref{prop: polynomial coefficient bounds}]
The lower bound follows from the nonnegativity of the coefficients, and the
upper bound follows from Theorem~\ref{thm: general smooth bound}.
\end{proof}

\begin{remark}\label{rem: Hirzebruch counterexample}
The equality $\Phi_{X,f}(t)=(1+t)^{\dim X+1}$ does not characterize
projective space when the underlying variety is allowed to vary. Over
$\mathbb C$, consider
$$ h:\PP^2\to\PP^2,\qquad h([x:y:z])=[x^2:y^2:z^2+x^2], $$
and set $p:=[0:0:1]$. We have $h^{-1}(p)=\{p\}$. In the local coordinates
$u=x/z$ and $v=y/z$ at $p$, the pullback of the maximal ideal of $p$ is
$(u^2,v^2)$. Let $\pi:X:=\mathrm{Bl}_p\PP^2\to\PP^2$, with exceptional
curve $E$. The pullback ideal becomes $\OO_X(-2E)$, so the universal property
of the blow-up gives a lift $g:X\to X$. If $H:=\pi^*\OO_{\PP^2}(1)$, then
$$ g^*H=2H,\qquad g^*E=2E. $$
The divisor $2H-E$ is ample and $g^*(2H-E)=2(2H-E)$. Thus $g$ is a polarized
endomorphism of $X\simeq\mathbf F_1$ and, in particular, is finite and
surjective.

We first determine the totally-periodic curves. If an integral curve
$C\subseteq\PP^2$ satisfies $(h^a)^{-1}(C)=C$, then comparison of degrees
gives $(h^a)^*C=2^aC$, so $C$ is a component of $R_{h^a}$. The Jacobian
determinant of $h$ is $8xyz$. Writing $P(w):=w^2+1$, the components of the
iterated ramification divisors, other than $(x=0)$ and $(y=0)$, are the lines
$L_c=(z=cx)$, where either $c=0$ or $P^j(c)=0$ for some $j\geq1$. If $L_c$ were
totally-periodic, then for some $a\geq1$ we would have
$$ P^a(w)-c=(w-c)^{2^a}, $$
where $P^a$ denotes the $a$-th iterate. Comparing the coefficients of
$w^{2^a-1}$ gives $c=0$, whereas the constant terms would then give
$P^a(0)=0$, contradicting $P^a(0)\in\mathbb Z_{>0}$. Thus the only
totally-periodic curves of $h$ are $(x=0)$ and $(y=0)$. Every
totally-periodic curve of $g$ distinct from $E$ descends to one of these two
curves. Conversely, their strict transforms and $E$ are totally invariant
under $g$. Hence $X$ has exactly three
totally-periodic curves.

It remains to count the totally-periodic points. The points $y\in\PP^2$ for
which $h^{-1}(y)$ consists of a single point are
$$ [1:0:1],\qquad [0:1:0],\qquad [0:0:1], $$
and their unique preimages are, respectively,
$$ [1:0:0],\qquad [0:1:0],\qquad [0:0:1]. $$
The last two points are fixed by $h$, whereas the unique preimage
$[1:0:0]$ of $[1:0:1]$ has more than one preimage. Thus the last two are
precisely the totally-periodic points of $h$. The point $[0:1:0]$ remains totally-periodic
on $X$, while $p$ is replaced by $E$ and
$$ g|_E:[u:v]\mapsto[u^2:v^2]. $$
The restriction has exactly two totally-periodic points. Since
$g^{-1}(E)=E$, these and $[0:1:0]$ are all the totally-periodic points of
$g$. Consequently,
$$ \Phi_{X,g}(t)=1+3t+3t^2+t^3=(1+t)^3, $$
but $X\simeq\mathbf F_1$ is not isomorphic to $\PP^2$.
\end{remark}

\section{Further applications}\label{sec: further applications}

We conclude with several consequences of trace splitting, invariant-ideal
vanishing, and the cycle bounds. The external results used below are cited
where they enter.

\subsection{Sharp bound for Completely ramified points}

Let $f:\PP^n\to\PP^n$ satisfy
$f^*\OO_{\PP^n}(1)\simeq\OO_{\PP^n}(q)$ for an integer $q>1$. A closed point
$p\in\PP^n$ is \emph{completely ramified} if $f^{-1}(p)$ consists of one point
set-theoretically. Denote the set of such points by $\mathrm{CR}(f)$.
The following proposition applies the trace splitting from
Lemma~\ref{lem: ideal trace splitting}, allowing the reduced subschemes on the
source and target to be different.

\begin{proposition}\label{prop: completely ramified points}
Assume that $q$ is invertible in $k$ and $q\geq n+1$. Then
the points of $\mathrm{CR}(f)$ are projectively linearly independent. In
particular, $|\mathrm{CR}(f)|\leq n+1$, and this bound is sharp.
\end{proposition}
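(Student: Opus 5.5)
The plan is to run the trace-splitting argument of Lemma~\ref{lem: ideal trace splitting}, but with different reduced subschemes on the source and the target. The finiteness of $\mathrm{CR}(f)$ should not be assumed a priori. Instead, I will work with an arbitrary finite subset $T\subseteq\mathrm{CR}(f)$ with $|T|\leq n+2$. I will show that $H^1(\PP^n,\mathcal I_T(1))=0$, i.e.\ that the points of $T$ impose independent conditions on linear forms.

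\emph{Step 1: setup.} Since $f^*\OO_{\PP^n}(1)\simeq\OO_{\PP^n}(q)$ with $q>1$, the map $f$ is finite and surjective. By miracle flatness it is finite locally free of rank $q^n$, which is invertible in $k$. For each $p\in T$ let $p'$ be the unique point of $f^{-1}(p)$. Distinct $p$ give distinct $p'$, so $T':=(f^{-1}(T))_{\mathrm{red}}$ is a reduced set of $|T|$ points.

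\emph{Step 2: the generalized splitting.} I claim the normalized trace gives a splitting
$$ \mathcal I_T\to f_*\mathcal I_{T'}\xrightarrow{q^{-n}\Tr_f}\mathcal I_T. $$
The proof of Lemma~\ref{lem: ideal trace splitting} goes through verbatim. Locally, the ideal $J$ of $T'$ equals $\sqrt{IB}$, where $I$ is the ideal of $T$. Elements of $J$ are therefore nilpotent modulo $IB$, so their traces lie in $I$ because $A_0/I$ is reduced. Moreover, pullback sends $I$ into $J$, and trace composed with pullback is multiplication by $q^n$. The hypothesis $(h^{-1}(Z))_{\mathrm{red}}=Z$ was only used to identify the target ideal; here we simply use $T'$ upstairs and $T$ downstairs. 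Twisting by $\OO(1)$ and using the projection formula, $H^1(\PP^n,\mathcal I_T(1))$ becomes a direct summand of $H^1(\PP^n,\mathcal I_{T'}\otimes f^*\OO(1))=H^1(\PP^n,\mathcal I_{T'}(q))$.

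\emph{Step 3: vanishing upstairs and conclusion.} A reduced set of $N$ points imposes independent conditions on forms of degree $\geq N-1$: for each point, take a product of $N-1$ linear forms, each vanishing at one of the other points but not at that point. Hence $H^1(\mathcal I_{T'}(d))=0$ for $d\geq N-1$, using $H^1(\OO(d))=0$. Since $|T'|=|T|\leq n+2\leq q+1$, we get $H^1(\mathcal I_{T'}(q))=0$, and therefore $H^1(\mathcal I_T(1))=0$. Thus any $n+2$ points of $\mathrm{CR}(f)$ would be linearly independent in $\PP^n$, which is impossible. Hence $|\mathrm{CR}(f)|\leq n+1$, and applying the same statement to $T=\mathrm{CR}(f)$ itself gives projective linear independence.

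\emph{Sharpness and main obstacle.} For sharpness, the coordinate power map $P_q$ has the $n+1$ coordinate vertices as completely ramified points. The main point requiring care is Step~2, namely checking that the trace argument does not use total invariance. I expect this to be straightforward from the local computation. The hypothesis $q\geq n+1$ enters only in Step~3, which is why we restrict to subsets of size at most $n+2$ rather than first proving finiteness.
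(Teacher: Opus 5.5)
Your proposal is correct and follows the paper's proof essentially step by step. It uses the same source/target trace splitting $\mathcal I_T\to f_*\mathcal I_{T'}\to\mathcal I_T$ with $T'=(f^{-1}(T))_{\mathrm{red}}$ and the same interpolation vanishing $H^1(\mathcal I_{T'}(q))=0$ for $q\geq|T'|-1$, obtains the same contradiction from a subset of size $n+2$, and then applies the argument to $\mathrm{CR}(f)$ itself; the only detail to fill in is that for degrees $d>N-1$ in Step~3 you multiply your separating products by a power of a linear form not vanishing at the relevant point, as the paper does.
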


\begin{proof}
For a finite subset $S\subseteq\mathrm{CR}(f)$, regarded as a reduced closed
subscheme, set $T_S:=(f^{-1}(S))_{\mathrm{red}}$. The map $T_S\to S$ is a
bijection, so $|T_S|=|S|$. $f$ is finite and flat, and therefore locally free of rank $\deg(f)=q^n$. Since
$T_S=(f^{-1}(S))_{\mathrm{red}}$, the same argument as in the proof of
Lemma~\ref{lem: ideal trace splitting}, with $S$ on the target and $T_S$ on the
source, gives a splitting
$$ \mathcal I_S\to f_*\mathcal I_{T_S}
\xrightarrow{q^{-n}\Tr_f}\mathcal I_S. $$
After twisting by $\OO_{\PP^n}(t)$, the projection formula gives a split
injection
$$ H^i(\PP^n,\mathcal I_S(t))\to
H^i(\PP^n,\mathcal I_{T_S}(qt)) $$
for every $i\geq0$ and $t\in\mathbb Z$.

We claim that any finite set $U\subseteq\PP^n$ satisfies
$H^1(\PP^n,\mathcal I_U(d))=0$ for $d\geq|U|-1$. Indeed, products of
$|U|-1$ hyperplanes separate the points of $U$, so the evaluation map is
surjective in degree $|U|-1$. Multiplication by a power of a linear form
nonvanishing on $U$ gives the same conclusion in every degree
$d\geq|U|-1$, and the vanishing follows from the corresponding ideal-sheaf exact sequence.

Suppose that $\mathrm{CR}(f)$ contains a subset $S$ of cardinality $n+2$.
Since $|T_S|=n+2$ and $q\geq n+1$, the preceding fact gives
$H^1(\PP^n,\mathcal I_{T_S}(q))=0$. The split injection with $i=t=1$ then
gives $H^1(\PP^n,\mathcal I_S(1))=0$. Hence linear forms would restrict
surjectively to $S$, contradicting
$h^0(\PP^n,\OO_{\PP^n}(1))=n+1<|S|$. Therefore
$|\mathrm{CR}(f)|\leq n+1$.

Now set $S:=\mathrm{CR}(f)$. Since $q\geq n+1\geq|T_S|-1$, the same argument
gives $H^1(\PP^n,\mathcal I_S(1))=0$. Thus linear forms restrict
surjectively to $S$, which is equivalent to the projective linear
independence of its points.
Finally, the coordinate power map has the $n+1$ coordinate vertices as
completely ramified points, so the bound is sharp.
\end{proof}

\begin{remark}
For $n=2$ over $\mathbb C$, Proposition~\ref{prop: completely ramified points}
gives the sharp bound $|\mathrm{CR}(f)|\leq3$ whenever $q\geq3$.
Amerik--Campana proved the degree-independent bound
$|\mathrm{CR}(f)|\leq9$ \cite[Theorem~1]{AC05}. The trace argument above does
not improve their estimate in the quadratic case.
\end{remark}

\subsection{Regularity, thresholds, and Fano blow-ups}

Throughout this subsection, let $f:\PP^n\to\PP^n$ satisfy
$f^*\OO_{\PP^n}(1)\simeq\OO_{\PP^n}(q)$ for an integer $q>1$ that is
invertible in $k$. Over $\mathbb C$, for a nonzero proper coherent ideal
$\mathfrak a$ on a smooth variety $V$, we write
$\mathrm{lct}(V;\mathfrak a):=
\inf_{\mathrm{ord}_E(\mathfrak a)>0}A_V(E)/\mathrm{ord}_E(\mathfrak a)$,
where $E$ ranges over the prime divisors over $V$ and $A_V(E)$ is the log
discrepancy.

\begin{proposition}\label{prop: invariant regularity}
Let $1\leq r\leq n$, and let $Z\subseteq\PP^n$ be a nonempty reduced closed subscheme
whose irreducible components belong to $T_{\infty}^r(f)$, and set $m:=n-r$.
Then $\mathcal I_Z$ is $(m+2)$-regular. In particular,
$\mathcal I_Z(m+2)$ is globally generated, so $Z$ is scheme-theoretically cut
out by forms of degree $m+2$.

If $k=\mathbb C$, then
$$ \mathrm{lct}(\PP^n;\mathcal I_Z)\geq\frac{r}{m+2}=\frac{r}{n-r+2}. $$
\end{proposition}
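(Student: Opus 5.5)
Castelnuovo--Mumford regularity needs $H^i(\PP^n,\mathcal I_Z(m+2-i))=0$ for all $i\geq1$. The plan is to get most of these from Proposition~\ref{prop: invariant ideal vanishing} and the rest from dimension considerations.

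First, $Z$ is a finite reduced union of members of $T_{\infty}^r(f)$, so a common iterate $f^b$ satisfies $((f^b)^{-1}(Z))_{\mathrm{red}}=Z$. By Remark~\ref{rem: numerical polarization}, Proposition~\ref{prop: invariant ideal vanishing} applies and gives
$$ H^i(\PP^n,\mathcal I_Z(t))=0\qquad\text{for all } i>0,\ t>0. $$
For $1\leq i\leq m+1$ we have $t=m+2-i\geq1$, so these groups vanish directly.

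For $i\geq m+2$, use the sequence $0\to\mathcal I_Z\to\OO_{\PP^n}\to\OO_Z\to0$. This gives $H^i(\mathcal I_Z(t))\cong H^{i-1}(\OO_Z(t))$ whenever $2\leq i\leq n-1$, or more generally whenever $H^{i-1}(\OO(t))=H^i(\OO(t))=0$. Since $\dim Z=m$, we get $H^{i-1}(\OO_Z(t))=0$ once $i-1>m$. The remaining case is $i=n$, with $t=m+2-n=2-r$. Here the tail of the sequence is
$$ H^{n-1}(\OO_Z(t))\to H^n(\mathcal I_Z(t))\to H^n(\OO_{\PP^n}(t))\to 0, $$
and $H^n(\OO_{\PP^n}(2-r))$ vanishes precisely when $2-r>-n-1$, that is, $r<n+3$, which always holds. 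If $n-1>m$ the left term is also zero. When $n-1=m$ (so $r=1$), the group $i=n=m+1$ already lies in the first range. If $n=m+2$ the two ranges meet. Some bookkeeping of these edge cases is needed, but no new ideas.

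With this, $\mathcal I_Z$ is $(m+2)$-regular, and Mumford's theorem gives global generation of $\mathcal I_Z(m+2)$.

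For the log canonical threshold bound over $\mathbb C$, I would first choose general forms $g_1,\dots,g_r$ of degree $m+2$ generating $\mathcal I_Z$ generically. By a Bertini-type argument (cf.\ Lazarsfeld, general members of a globally generated linear system), for general $D=\sum_i \frac{c}{m+2}\,\mathrm{div}(g_i)$ one has $\mathrm{lct}(\mathcal I_Z)\geq$ the corresponding bound. The cleaner route is the standard inequality: if $\mathcal I_Z\otimes\OO(d)$ is globally generated, then $\mathrm{lct}(\mathcal I_Z)\geq \mathrm{lct}(\mathfrak b)$, and multiplier-ideal theory gives $\mathcal J(\mathfrak a^{c})$ comparisons.

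I expect this lct step to be the main obstacle. What I would try first is to use that $\mathcal I_Z$ is generated in degree $d=m+2$ and has codimension $r$. For any divisor $E$ over $\PP^n$ with center $W\subseteq Z$, take a general point of the center and a general smooth complete intersection of $n-r$ hyperplanes through it. This reduces to the case of an $\mathfrak m$-primary situation in dimension $r$, where we need $A(E)/\mathrm{ord}_E(\mathcal I_Z)\geq r/(m+2)$.

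For that reduction, I would combine $\mathrm{ord}_E(\mathcal I_Z)\leq (m+2)\,\mathrm{ord}_E(\mathfrak m_x)$ with $A(E)\geq r\,\mathrm{ord}_E(\mathfrak m_x)$ after restricting to the $r$-dimensional slice. The first inequality should come from the fact that a general generator of degree $m+2$ vanishes to order at most $m+2$ along a general line through $x$ not in $Z$. The second is the classical bound $\mathrm{lct}(\mathfrak m_x)=r$ in dimension $r$, together with inversion of adjunction to pass from the slice back to $\PP^n$.
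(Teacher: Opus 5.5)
Your regularity argument is correct and is the paper's own: Proposition~\ref{prop: invariant ideal vanishing} kills $H^i(\PP^n,\mathcal I_Z(m+2-i))$ for $1\leq i\leq m+1$. For $i\geq m+2$ these groups inject into $H^i(\PP^n,\OO_{\PP^n}(m+2-i))=0$, since $H^{i-1}(\OO_Z(m+2-i))=0$ because $\dim Z=m$.

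The log canonical threshold part has a genuine gap. Your reduction to a general codimension-$m$ linear slice $S\ni x$ is fine if you phrase it as the restriction inequality $\mathrm{lct}_x(\PP^n;\mathcal I_Z)\geq\mathrm{lct}_x(S;\mathfrak a)$ with $\mathfrak a:=\mathcal I_Z\OO_{S,x}$. (Divisors over $\PP^n$ do not restrict to divisors over $S$, so the argument cannot be run with the same $E$.) The problem is the key inequality $\mathrm{ord}_E(\mathfrak a)\leq(m+2)\,\mathrm{ord}_E(\mathfrak m_x)$ for all $E$ over $S$ centered at $x$. It is equivalent to $\mathfrak m_x^{m+2}\subseteq\overline{\mathfrak a}$, and this does not follow from $\mathfrak a$ being generated by forms of degree $m+2$.

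Your general-line argument only shows $\mathrm{ord}_x(\mathfrak a)\leq m+2$, i.e.\ the inequality for the exceptional divisor of the blow-up of $x$. On its own that yields only $\mathrm{lct}_x(\mathfrak a)\geq 1/(m+2)$. Here is a concrete failure. For $d\geq2$, the degree-$d$ forms $YW^{d-1}-X^d$ and $XYW^{d-2}$ on $\PP^2$ cut out the finite set $\{[0:0:1],[0:1:0]\}$. In the chart $W=1$ they generate $\mathfrak a=(v-u^d,uv)=(v-u^d,u^{d+1})$. The monomial valuation in the coordinates $(u,v-u^d)$ with weights $(1,d+1)$ then has $\mathrm{ord}_E(\mathfrak a)=d+1>d=d\,\mathrm{ord}_E(\mathfrak m)$. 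Here $\mathrm{lct}(\mathfrak a)=1+1/(d+1)\geq 2/d$, so the threshold bound survives, but not for your reason.

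What is missing is a global or numerical input in place of the containment. The paper does not slice at all. With $c:=\mathrm{lct}(\PP^n;\mathcal I_Z)$, the non-klt locus of $(\PP^n,c\cdot Z)$ lies in $Z$, so its codimension is at least $r$. The de Fernex--Ein--Musta\c{t}\u{a} bound \cite[Corollary~3.6]{dFEM03}, applied to the degree-$(m+2)$ equations of $Z$, then gives $c\geq r/(m+2)$ directly.

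If you want to keep your slicing, replace the containment by a multiplicity estimate. Take $r$ general members of the linear system on $S$ spanned by restrictions of $H^0(\PP^n,\mathcal I_Z(m+2))$; its base locus $Z\cap S$ is finite. These members form a complete intersection contained in $\mathfrak a$ near $x$, so B\'ezout gives $e(\mathfrak a)\leq(m+2)^r$. The de Fernex--Ein--Musta\c{t}\u{a} inequality $e(\mathfrak a)\geq r^r/\mathrm{lct}(\mathfrak a)^r$ for $\mathfrak m$-primary ideals in dimension $r$ then yields $\mathrm{lct}_x(S;\mathfrak a)\geq r/(m+2)$.
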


\begin{proof}
A common iterate of $f$ makes $Z$ totally invariant. Put $d:=m+2$. For
$1\leq i\leq m+1$, Proposition~\ref{prop: invariant ideal vanishing} gives
$$ H^i(\PP^n,\mathcal I_Z(d-i))=0 $$
because $d-i>0$. If $i\geq m+2$, then
$H^{i-1}(Z,\OO_Z(d-i))=0$ by $\dim Z=m$. It follows that
$$ H^i(\PP^n,\mathcal I_Z(d-i))=H^i(\PP^n,\OO_{\PP^n}(d-i))=0. $$
Thus $\mathcal I_Z$ is $d$-regular,
and $\mathcal I_Z(d)$ is globally generated \cite[Tag~089X]{Stacks}.

For the last assertion, put
$c:=\mathrm{lct}(\PP^n;\mathcal I_Z)$, and let $e$ be the codimension of the
non-klt locus of $(\PP^n,c\cdot Z)$. Since this locus is contained in
$Z$, one has $e\geq r$. The estimate of de Fernex--Ein--Musta\c{t}\u{a}
\cite[Corollary~3.6]{dFEM03}, applied to the degree-$d$ equations of $Z$,
gives $\mathrm{lct}(\PP^n;\mathcal I_Z)\geq e/d\geq r/d$.
\end{proof}

\begin{remark}\label{rem: Rees discrepancy bound}
The preceding threshold estimate has a valuative form. Assume $k=\mathbb C$
and $r\geq2$, and let $\pi:Y\to\PP^n$ be the normalized blow-up of
$\mathcal I_Z$. Write
$$ \mathcal I_Z\OO_Y=\OO_Y\left(-\sum_i b_iE_i\right). $$
The $E_i$ are the Rees divisors of $\mathcal I_Z$, and the valuative
definition of the log canonical threshold gives
$$ A_{\PP^n}(E_i)\geq\frac{r}{n-r+2}b_i $$
for every $i$.
\end{remark}

\begin{corollary}\label{cor: Fano blow-up}
Assume that $n\geq2$, and let $Z\in T_{\infty}^2(f)$ be smooth. Then the blow-up
$\mathrm{Bl}_Z\PP^n$ is Fano. 
\end{corollary}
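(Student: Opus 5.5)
The plan is to combine the regularity statement of Proposition~\ref{prop: invariant regularity} with the standard description of the anticanonical class of a blow-up along a smooth center.

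Let $Z\in T_{\infty}^2(f)$ be smooth, so $r=2$ and $m=n-2$. Let $\pi:Y:=\mathrm{Bl}_Z\PP^n\to\PP^n$ be the blow-up, with exceptional divisor $E$, and put $H:=\pi^*\OO_{\PP^n}(1)$.

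\emph{Canonical class.} Since $Z$ is smooth of codimension $2$, $Y$ is smooth and $E=\PP(N_{Z/\PP^n})\to Z$ is a $\PP^1$-bundle. The blow-up formula gives
$$ K_Y=\pi^*K_{\PP^n}+(2-1)E=-(n+1)H+E, $$
so
$$ -K_Y=H+(nH-E). $$

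\emph{Global generation.} Applying Proposition~\ref{prop: invariant regularity} with $m+2=n$, the sheaf $\mathcal I_Z(n)$ is globally generated. Since $\mathcal I_Z\OO_Y=\OO_Y(-E)$, the pulled-back sections generate $\OO_Y(nH-E)$. Hence $nH-E$ is globally generated and defines a morphism $\varphi:Y\to\PP^N$ with $\varphi^*\OO(1)=\OO_Y(nH-E)$.

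\emph{Ampleness.} Consider $\Psi:=(\pi,\varphi):Y\to\PP^n\times\PP^N$. Then
$$ \Psi^*\OO(1,1)=\OO_Y(H+nH-E)=\OO_Y(-K_Y). $$
Since $\OO(1,1)$ is ample, it suffices to show that $\Psi$ is finite, that is, that $\Psi$ contracts no curve. Let $C\subseteq Y$ be an integral curve with $\pi(C)$ a point. Then $C$ lies in a fiber $\pi^{-1}(z)\simeq\PP^1$ of $E\to Z$. On such a fiber $\ell$ we have $E\cdot\ell=-1$ and $H\cdot\ell=0$, so $(nH-E)\cdot\ell=1>0$, and therefore $\varphi$ does not contract $C$. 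Every other curve is not contracted by $\pi$. Thus $\Psi$ is proper and quasi-finite, hence finite, and the pullback of an ample bundle by a finite morphism is ample. Therefore $-K_Y$ is ample and $Y$ is Fano.

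The only substantive input is the $n$-regularity of $\mathcal I_Z$, which is already available from Proposition~\ref{prop: invariant regularity}. The remaining point requiring care is the finiteness of $\Psi$, and the fiber computation above handles it. The assumption $n\geq2$ is needed so that codimension-$2$ subvarieties exist. When $n=2$, $Z$ is a finite set of points and the same argument applies.
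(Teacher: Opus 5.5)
Your proposal is correct and follows the paper's proof: $n$-regularity from Proposition~\ref{prop: invariant regularity} makes $D=nH-E$ globally generated, $(\pi,\varphi_D)$ is finite, and $-K_Y=H+D$ is the pullback of $\OO(1,1)$. The one difference is cosmetic: you check finiteness by computing $(nH-E)\cdot\ell=1$ on the fibers of $E\to Z$, whereas the paper invokes $\pi$-ampleness of $D$. Also, for $n=2$ the center $Z$ is a single point, since it is integral, rather than a finite set of points.
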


\begin{proof}
Let $\pi:Y:=\mathrm{Bl}_Z\PP^n\to\PP^n$,
$H:=\pi^*\OO_{\PP^n}(1)$, and $E$ be the exceptional divisor. Since
$\mathcal I_Z\OO_Y=\OO_Y(-E)$, pulling back the evaluation map for the
globally generated sheaf $\mathcal I_Z(n)$ from
Proposition~\ref{prop: invariant regularity} shows that $D:=nH-E$ is globally
generated. It is also $\pi$-ample. If $\varphi_D:Y\to\PP^N$ is the morphism
defined by $D$, then
$(\pi,\varphi_D):Y\to\PP^n\times\PP^N$ is finite: a positive-dimensional
fiber would be contained in a fiber of $\pi$ and contracted by $D$, contrary
to relative ampleness. Hence
$$ \OO_Y(H+D)\simeq
(\pi,\varphi_D)^*\OO_{\PP^n\times\PP^N}(1,1) $$
is ample. The canonical divisor formula
for a smooth codimension-two blow-up gives
$$ -K_Y=-\pi^*K_{\PP^n}-E\sim H+D, $$
so $Y$ is Fano. 
\end{proof}

Set-theoretic total invariance of $Z$ does not by itself imply that $f$ lifts
to the ordinary blow-up. Corollary~\ref{cor: Fano blow-up} asserts that this
blow-up is Fano and does not require such a lift.

\subsection{Log canonical centers of invariant boundaries}

\begin{corollary}\label{cor: lc center bound}
Let $X$ be a smooth complex projective variety of dimension $n$, $L$ an ample
line bundle on $X$, and $f:X\to X$ an int-amplified endomorphism. Suppose
that $D$ is a reduced effective divisor such that $f^{-1}(D)=D$, and choose
an integer $a\geq1$ such that $A:=L^{\otimes a}$ is very ample. Then
$(X,D)$ is log canonical. For
$1\leq r\leq n$, let $\mathrm{LCC}_r(X,D)$ denote the set of its log
canonical centers of codimension $r$. Then
$$ \sum_{W\in\mathrm{LCC}_r(X,D)}\deg_A(W)\leq B_{X,L,a,r}. $$
In particular,
$$ |\mathrm{LCC}_r(X,D)|\leq\left\lfloor\frac{B_{X,L,a,r}}{a^{n-r}}\right\rfloor. $$
For $X=\PP^n$ and $L=\OO_{\PP^n}(1)$, this becomes
$$ \sum_{W\in\mathrm{LCC}_r(\PP^n,D)}\deg(W)\leq\binom{n+1}{r}. $$
\end{corollary}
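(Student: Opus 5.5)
We need two things: that $(X,D)$ is log canonical, and that every log canonical center of $(X,D)$ is totally periodic under $f$. Once every $W\in\mathrm{LCC}_r(X,D)$ lies in $T_{\infty}^r(X,f)$, the degree bound is exactly the one established in the proof of Theorem~\ref{thm: general smooth bound}, namely $\sum\deg_A(V)\leq B_{X,L,a,r}$ for every finite subset $\mathcal S\subseteq T_\infty^r(X,f)$. Applying it to $\mathcal S=\mathrm{LCC}_r(X,D)$ gives the first inequality. Dividing by $a^{n-r}$ and using $\deg_A(W)=a^{n-r}\deg_L(W)\geq a^{n-r}$ gives the cardinality bound. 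The $\PP^n$ case follows from the identification $B_{\PP^n,\OO(1),1,r}=\binom{n+1}{r}$ in the proof of Theorem~\ref{thm: projective space bound}.

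For log canonicity, I would use the logarithmic ramification formula. Since $f^{-1}(D)=D$ set-theoretically and $D$ is reduced, we have
$$K_X+D=f^*(K_X+D)+R',$$
where $R'$ is the ramification divisor with the components of $D$ removed, which is effective. The int-amplified hypothesis makes $f$ finite, and since $X$ is smooth, $f$ is flat. The standard result on finite morphisms and discrepancies (Koll\'ar--Mori, Prop.~5.20) then says: $(X,D)$ is lc if and only if $(X,D-R')$ is lc, and $(X,D-R')$ is lc whenever $(X,D)$ is. This alone does not close the argument. Instead, I would invoke the known theorem (Broustet--H\"oring, or Meng--Zhang in the int-amplified setting) that a reduced totally invariant divisor of a non-isomorphic int-amplified endomorphism of a smooth variety yields an lc pair. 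Alternatively, iterate $f$ and compare discrepancies: for a divisor $E$ over $X$ with $a(E,X,D)<-1$, the discrepancy transforms along $f$ via ramification indices, so iterating would produce discrepancies going to $-\infty$, which is impossible since they lie in a finite set on a log resolution. I would cite the existing theorem rather than reprove it.

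For total periodicity of the lc centers, I would take an lc center $W$ and use the same crepant-type formula. Because $R'$ is effective, each lc place $E$ of $(X,D)$ over $W$ pulls back under $f$ so that every lc center of $(X,D)$ over $f^{-1}(W)$ comes from lc centers of $(X,D)$. More precisely, $f^{-1}(W)$ consists of lc centers of $(X,f^*(K_X+D)-K_X)=(X,D-R')$, and these are lc centers of $(X,D)$ since $R'\geq0$ only raises discrepancies. Conversely, $f$ maps lc centers to lc centers. Hence $f^{-1}$ maps the finite set of lc centers of $(X,D)$ into itself. Finiteness holds because lc centers are strata of a log resolution. Since $f$ is surjective, $f^{-1}$ acts as a permutation of each $\mathrm{LCC}_r$, so some iterate fixes each center, and each $W$ lies in $T_\infty^r(X,f)$.

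The main obstacle is this discrepancy bookkeeping: checking precisely that the irreducible components of $f^{-1}(W)$ are again lc centers, and in particular that none of them falls outside $\mathrm{LCC}(X,D)$. Here the components of $R'$ meeting $W$ must be controlled. My first attempt would be to use Koll\'ar--Mori~5.20 on the pair $(X,D-R')$ and argue that any lc place of $(X,D-R')$ with center not inside $\mathrm{Supp}\,R'\setminus D$ is an lc place of $(X,D)$.
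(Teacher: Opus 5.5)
Your outline is sound and has the same skeleton as the paper. You prove $(X,D)$ is lc, show $\mathrm{LCC}_r(X,D)\subseteq T_{\infty}^r(X,f)$, and then apply the estimate $\sum_{V\in\mathcal S}\deg_A(V)\leq B_{X,L,a,r}$ from the proof of Theorem~\ref{thm: general smooth bound}. Where you differ is in how the two inputs are obtained.

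For log canonicity, the paper does not quote an int-amplified lc theorem. It uses \cite[Theorem~1.4]{BH14}, which gives, after iterating, a totally invariant component $Z$ of the non-lc locus with $\deg(f|_Z)=\deg(f)$. Flatness gives $f^*[Z]=c[Z]$, and \cite[Lemma~2.6]{MZ20} gives $c>1$ because $f$ is int-amplified. But the projection formula $c\deg(f|_Z)=\deg(f)$ forces $c=1$, a contradiction. If you cite a theorem instead, make sure it covers int-amplified maps and not only polarized ones. You should also drop your fallback sketch. The divisors produced by iterating $f$ do not live on a fixed log resolution, and a non-lc pair of dimension $\geq2$ already has discrepancies unbounded below. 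So ``discrepancies tending to $-\infty$'' contradicts nothing.

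For the lc centers, the paper simply cites \cite[Lemma~2.10]{BH14}. Your log-ramification argument is a correct self-contained proof of that lemma, and the obstacle you flag is not really one. Since $(X,D)$ is lc and $R'\geq0$, any lc place $E$ of $(X,D-R')$ satisfies $a(E,X,D)=-1-\mathrm{ord}_E(R')\leq-1$. Hence equality holds and $\mathrm{ord}_E(R')=0$ automatically, with no need to control $R'$.

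Two points should be written out.
\begin{itemize}
\item Every component of $f^{-1}(W)$ is the center of some extension of $v_E$. To see this, pass to a Galois closure of the function-field extension; its group acts transitively both on the points over the generic point of $W$ and on the extensions of $v_E$.
\item The permutation should come from counting, not from the unproved claim that $f$ maps lc centers to lc centers. The components of $f^{-1}(W)$ have codimension $r$, since $f$ is finite and flat, and they map onto $W$. So for distinct $W$ they are distinct members of the finite set $\mathrm{LCC}_r(X,D)$, and each $W$ contributes at least one. Hence every $f^{-1}(W)$ is irreducible and $W\mapsto f^{-1}(W)$ is a bijection.
\end{itemize}

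Your route makes the ramification mechanism explicit and needs only Koll\'ar--Mori~5.20. The paper's route is shorter and obtains the int-amplified lc statement from a general structure theorem plus an eigenvalue fact.
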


\begin{proof}
Suppose that $(X,D)$ is not log canonical. By
\cite[Theorem~1.4]{BH14}, after replacing $f$ by an iterate, an irreducible
component $Z$ of the non-log-canonical locus is totally invariant and
$\deg(f|_Z)=\deg(f)$. The morphism $f$ is finite, and since $X$ is smooth it
is flat, so
$f^*[Z]=c[Z]$ for some integer $c>0$. As $f$ is int-amplified and $Z$ is a
proper effective cycle, \cite[Lemma~2.6]{MZ20} gives $c>1$. On the other hand,
the projection formula gives $c\deg(f|_Z)=\deg(f)$, a contradiction. Hence
$(X,D)$ is log canonical.

By \cite[Lemma~2.10]{BH14}, every log canonical center becomes totally
invariant after replacing $f$ by an iterate. Thus the centers of codimension
$r$ belong to $T_{\infty}^r(X,f)$. The first two estimates follow from
Theorem~\ref{thm: general smooth bound}, and the last one follows from
Theorem~\ref{thm: projective space bound}.
\end{proof}

\subsection{Contractible extremal rays}

We use the term \emph{contractible extremal ray} in the sense of
\cite[Definition~4.1]{MZ20}.

\begin{lemma}\label{lem: fixed contraction component}
Let $X$ be a complex projective variety and $F\subseteq X$ an integral
closed subvariety of positive dimension. There are at most $\dim F$
contractible extremal rays whose contraction locus has $F$ as an irreducible
component.
\end{lemma}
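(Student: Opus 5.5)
The plan is to argue by dimension counting. Let $R_1,\dots,R_s$ be distinct contractible extremal rays, with contractions $\varphi_i:X\to Y_i$, such that $F$ is an irreducible component of the contraction locus $\mathrm{Exc}(\varphi_i)$ for every $i$. From these contractions I will build a chain of subvarieties
$$ W_1\subsetneq W_2\subsetneq\cdots\subsetneq W_s\subseteq F, $$
all passing through a fixed general point $x\in F$, with strictly increasing dimensions. Since $\dim W_1\geq1$, this gives $s\leq\dim F$.

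First I record two facts about a single contraction. A curve $C\subseteq X$ is contracted by $\varphi_i$ if and only if $[C]\in R_i$. Since $F$ is a component of $\mathrm{Exc}(\varphi_i)$, the fibre of $\varphi_i|_F$ through a general point of $F$ is positive-dimensional. I also want this fibre to stay inside $F$. Near a general point of $F$, the locus $\mathrm{Exc}(\varphi_i)$ coincides with $F$, and the fibre through such a point lies in $\mathrm{Exc}(\varphi_i)$. So I will take the irreducible component of the fibre that lies in $F$, or, if needed, replace the fibre by the fibre of $\varphi_i|_F$.

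Next I construct the $W_k$ inductively. Put $W_1$ equal to the irreducible component through $x$ of the fibre of $\varphi_1|_F$ through $x$. Given $W_{k-1}$, let $W_k$ be the closure of the union of the fibres of $\varphi_k|_F$ through the points of $W_{k-1}$; this is irreducible and contains $W_{k-1}$. Along the way I keep track of the following property:
$$ (\ast)\qquad \text{every curve } C\subseteq W_k \text{ has } [C]\in R_1+\cdots+R_k . $$
I intend to prove $(\ast)$ using the finite morphism $(\varphi_1,\dots,\varphi_k)$ restricted to $W_k$, or a rational-chain-type degeneration argument. The idea is that curves in $W_k$ should be numerically equivalent to effective combinations of the fibre curves used to build $W_k$.

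Now suppose that $\dim W_k=\dim W_{k-1}$. Then $W_k=W_{k-1}$, so $W_{k-1}$ contains a curve contracted by $\varphi_k$, and its class lies in $R_k$. By $(\ast)$ this class also lies in $R_1+\cdots+R_{k-1}$. A nonzero element of an extremal ray $R_k$ that is written as a sum of classes from $R_1,\dots,R_{k-1}$ forces every summand to lie in $R_k$. Hence $R_k=R_j$ for some $j<k$, contradicting distinctness. So the dimensions strictly increase, and the chain has length at most $\dim F$.

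The main obstacle is $(\ast)$. A curve inside a subvariety swept out by contracted curves need not obviously have class in the cone those curves span. My first attempt will be to avoid $(\ast)$ altogether. I would compose the contractions: $\varphi_{1}$ and $\varphi_{2}$ both factor through the contraction of the face spanned by $R_1$ and $R_2$, when that face is contractible. Failing that, I would use the fact that $\varphi_k$ is finite on every $\varphi_j$-fibre for $j\neq k$. With that fact I would try to show directly that $\dim\varphi_k(W_{k-1})=\dim W_{k-1}$ while $\varphi_k$ has positive-dimensional fibres through the points of $W_{k-1}$. This would give $\dim W_k>\dim W_{k-1}$ by the fibre dimension theorem, and it requires only that $\varphi_k$ contract no curve of $W_{k-1}$, which is a weaker form of $(\ast)$.
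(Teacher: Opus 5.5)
Your argument stands or falls with $(\ast)$, or with its weak form that $\varphi_k$ contracts no curve of $W_{k-1}$. That step is never supplied, and your fallbacks do not supply it.

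\begin{itemize}
\item \emph{Fibrewise finiteness.} For $k\geq3$, $W_{k-1}$ contains curves that lie in no fibre of any single $\varphi_j$. Finiteness of $\varphi_k$ on each $\varphi_j$-fibre says nothing about those curves, so the ``weaker form'' is as hard as $(\ast)$.
\item \emph{Face contractions.} Nothing in the hypotheses (an arbitrary complex projective $X$) makes a face such as $R_1+R_2$ contractible.
\item \emph{The direct attempt.} Take $C\subseteq W_k$ not contracted by $\varphi_k$, and $C'\subseteq W_{k-1}$ with $\varphi_k(C')=\varphi_k(C)$. Then $\varphi_{k*}(b[C]-a[C'])=0$ for some $a,b>0$. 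To conclude $b[C]-a[C']\in\mathbb{R}R_k$ you need $\varphi_k^*N^1(Y_k)=R_k^\perp$, which is \cite[Definition~4.1(3)]{MZ20}; your proposal never uses this condition. Even with it, you only get $b[C]=a[C']+\mu r_k$ with $\mu\in\mathbb{R}$ of unknown sign.
\end{itemize}

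For $k=2$, extremality of $R_1$ forces $\mu\geq0$: otherwise $a[C']=b[C]+|\mu|r_2$ would decompose a nonzero element of $R_1$ with a summand in $R_2\neq R_1$. For $k\geq3$, however, $[C']$ is only known to lie in the cone $R_1+\cdots+R_{k-1}$. That cone is in general not a face of $\overline{\mathrm{NE}}(X)$, so $\mu<0$ is not excluded. Compare the negative section of a Hirzebruch surface, which is another section minus a multiple of the fibre. As it stands, the chain argument proves the lemma at best when $\dim F\leq2$.

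The missing idea is to work with divisor classes restricted to $F$ rather than with arbitrary curves inside swept-out subvarieties. This is the paper's route.

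\begin{itemize}
\item On $V_F=\mathrm{im}(N^1(X)_{\mathbb C}\to N^1(F)_{\mathbb C})$, the map $\alpha\mapsto\alpha^{\dim F}$ is a nonzero homogeneous polynomial of degree $\dim F$, since an ample class has positive top self-intersection on $F$.
\item For each ray $R$, choose a contracted curve $C_R\subseteq F$ and set $L_R=\{\alpha\in V_F:\alpha\cdot C_R=0\}$. By \cite[Definition~4.1(3)]{MZ20}, every class in $L_R$ is the restriction of some $\pi_R^*P$. Its top power on $F$ vanishes because $\dim\pi_R(F)<\dim F$, so each hyperplane $L_R$ lies in the zero locus of the polynomial.
\item If $L_R=L_{R'}$, then $\pi_R^*H_R\cdot C_{R'}=0$ for an ample $H_R$ on $Y_R$. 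So $\pi_R$ contracts $C_{R'}$, and $R'=R$ by \cite[Definition~4.1(2)]{MZ20}.
\item A polynomial of degree $\dim F$ has at most $\dim F$ pairwise non-proportional linear factors, which gives the bound.
\end{itemize}

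The vanishing of a top self-intersection replaces your dimension count and needs no control of curve classes inside $W_k$.
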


\begin{proof}
We follow \cite[Lemma~4.4]{MZ20}, which uses the top self-intersection
polynomial. Put
$$
\begin{aligned}
V_F&:=\mathrm{im}(N^1(X)_{\mathbb C}\to N^1(F)_{\mathbb C}),\\
S_F&:=\{\alpha\in V_F:\alpha^{\dim F}=0\}.
\end{aligned}
$$
This is the zero locus of a nonzero homogeneous polynomial of degree
$\dim F$, since the restriction of an ample class has positive top
self-intersection on $F$.

For each ray $R$ under consideration, choose a curve $C_R\subseteq F$
spanning $R$ and set
$$ L_R:=\{D|_F\in V_F:D\cdot C_R=0\}. $$
Let $\pi_R:X\to Y_R$ be the contraction of $R$. Such a curve exists because
the curves contracted by $\pi_R$ cover a dense
subset of $F$, and a contracted curve through a general point of $F$ is
contained in $F$. Intersection with $C_R$ defines a well-defined linear
functional on $V_F$; it is nonzero because an ample divisor has positive
intersection with $C_R$. Thus $L_R$ is a hyperplane.

The same covering property shows that a general fiber of
$\pi_R|_F$ is positive-dimensional, so $\dim\pi_R(F)<\dim F$. By
\cite[Definition~4.1(3)]{MZ20}, after complexification, every class in $L_R$
is the restriction of $\pi_R^*P$ for some
$P\in N^1(Y_R)_{\mathbb C}$. The projection formula gives
$$ \bigl((\pi_R^*P)|_F\bigr)^{\dim F}
=P^{\dim F}\cdot(\pi_R)_*[F]=0. $$
Hence $L_R\subseteq S_F$, and $L_R$ is a hyperplane component of $S_F$.

The assignment $R\mapsto L_R$ is injective. Indeed, suppose that
$L_R=L_{R'}$, choose a curve $C_{R'}\subseteq F$ spanning $R'$, and let
$H_R$ be an ample divisor on $Y_R$. Since
$(\pi_R^*H_R)|_F\in L_R=L_{R'}$, we have
$\pi_R^*H_R\cdot C_{R'}=0$. Thus $\pi_R$ contracts $C_{R'}$, and
\cite[Definition~4.1(2)]{MZ20} gives $R'=R$.
A nonzero polynomial of degree $\dim F$ has at most $\dim F$ distinct
hyperplane factors, proving the assertion.
\end{proof}

\begin{proposition}\label{prop: contractible ray bound}
Let $X$ be a smooth complex projective variety of dimension $n$, $L$ an ample
line bundle on $X$, and $f:X\to X$ an int-amplified endomorphism. Fix
$a\geq1$ such that $L^{\otimes a}$ is very ample, and set
$$ b_r:=\left\lfloor\frac{B_{X,L,a,r}}{a^{n-r}}\right\rfloor\qquad(1\leq r\leq n-1). $$
If $\mathcal R_{\mathrm{contr}}(X)$ is the set of contractible extremal rays
of $\overline{\mathrm{NE}}(X)$, then
$$ |\mathcal R_{\mathrm{contr}}(X)|\leq n+\sum_{r=1}^{n-1}(n-r)b_r. $$
Here $b_1$ may be replaced by
$\min\{b_1,n+\rho(X)\}$, where $\rho(X)$ is the Picard number.
If $X$ is a Mori dream space, the same formula bounds all extremal rays of
$\overline{\mathrm{NE}}(X)$. This applies, in particular, when $X$ is
rationally connected.
\end{proposition}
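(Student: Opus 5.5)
Following Meng--Zhang \cite[\S4]{MZ20}, I would sort the contractible extremal rays of $\overline{\mathrm{NE}}(X)$ by their contraction loci. For a contractible ray $R$ with contraction $\pi_R:X\to Y_R$, the exceptional locus $\mathrm{Exc}(\pi_R)$ is either all of $X$ (fiber type) or a proper closed subset. The input from \cite{MZ20} is that, after replacing $f$ by an iterate, $f$ permutes the finitely many contractible rays. The locus of a fixed ray is then totally invariant. So every irreducible component of every proper contraction locus lies in $T_{\infty}^r(X,f)$ for some $1\le r\le n-1$; the locus has positive dimension, so $r=n$ does not occur. This reduces the count to a count of totally periodic subvarieties, which Theorem~\ref{thm: general smooth bound} bounds by $|T_{\infty}^r(X,f)|\le b_r$.

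Next I would bound how many rays can share a given component. For each proper contractible ray $R$, choose an irreducible component $F_R$ of its contraction locus. By Lemma~\ref{lem: fixed contraction component}, a subvariety $F$ of codimension $r$ (so $\dim F=n-r$) is a component of the locus of at most $n-r$ rays. Summing over $F\in T_{\infty}^r(X,f)$ and over $1\le r\le n-1$ bounds the number of non-fiber-type rays by $\sum_{r=1}^{n-1}(n-r)b_r$.

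For fiber-type rays, where the locus is $X$ itself, I would apply Lemma~\ref{lem: fixed contraction component} with $F=X$. The hypotheses hold because a contracted curve passes through a general point. This gives at most $n$ such rays, accounting for the leading term $n$ in the bound.

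For the refinement of $b_1$: in codimension one, Remark~\ref{rem: invariant divisors} and \cite[Theorem~1.1]{Zho21} give $|T_{\infty}^1(X,f)|\le n+\rho(X)$, so $b_1$ may be replaced by the minimum. For a Mori dream space, every extremal ray of $\overline{\mathrm{NE}}(X)$ is contractible in the sense of \cite[Definition~4.1]{MZ20}: the cone is rational polyhedral and each face is realized by a contraction. Rationally connected smooth varieties with an int-amplified endomorphism are Mori dream spaces by known results of Meng--Zhang, which gives the final assertion.

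The main obstacle is the first step: checking that the $f$-periodicity of contractible rays, together with total invariance of their loci, holds in the precise form of \cite{MZ20}. In particular, I need every component of a proper contraction locus to be totally periodic, and not only the locus as a whole. The plan is to take the reduced preimage of the locus under the fixing iterate and use finiteness of $f$ to permute its components, as in the proof of Proposition~\ref{prop: polynomial functoriality}(2).
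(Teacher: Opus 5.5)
Your proposal is correct and follows essentially the same route as the paper. The structure matches: fiber-type rays are bounded by $n$ via Lemma~\ref{lem: fixed contraction component} with $F=X$, and every other ray is charged to a totally-periodic component of codimension $r$ that absorbs at most $n-r$ rays, followed by Theorem~\ref{thm: general smooth bound}, Remark~\ref{rem: invariant divisors}, and semiample contractions on a Mori dream space. The paper simply quotes \cite[Lemma~4.3]{MZ20} for the total periodicity of each component of the contraction locus; your permutation argument also recovers this, provided you work with the images $f(W_i)$, since the locus need not be equidimensional. For the rationally connected case, the correct input is Yoshikawa's theorem that such $X$ is of Fano type \cite[Corollary~6.7]{Yos21}, combined with \cite[Corollary~1.3.2]{BCHM10}, rather than an unspecified result of Meng--Zhang.
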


\begin{proof}
For a contractible ray $R$, let $\Sigma_R$ be its contraction locus.
Meng--Zhang \cite[Lemma~4.3]{MZ20} show that every irreducible component of
$\Sigma_R$ is totally-periodic under $f$. The rays with $\Sigma_R=X$ are at
most $n$ by Lemma~\ref{lem: fixed contraction component}. If
$\Sigma_R\neq X$, choose one of its irreducible components $F$. It has
positive dimension, and if $\mathrm{codim}_X(F)=r$, then
$F\in T_{\infty}^r(X,f)$. Lemma~\ref{lem: fixed contraction component} gives at
most $n-r$ rays for each such $F$. Hence
$$ |\mathcal R_{\mathrm{contr}}(X)|\leq n+\sum_{r=1}^{n-1}(n-r)|T_{\infty}^r(X,f)|, $$
and Theorem~\ref{thm: general smooth bound} gives the stated estimate.
The improvement in codimension one follows from
Remark~\ref{rem: invariant divisors}.

On a Mori dream space the nef cone is rational polyhedral and generated by
semiample classes \cite[Definition~1.10 and Proposition~1.11]{HK00}. Let $R$
be an extremal ray of $\overline{\mathrm{NE}}(X)$. Its dual face
$$ R^\perp\cap\mathrm{Nef}(X) $$
is a facet. Choose a rational divisor class in its relative interior and take
the Stein factorization $\pi:X\to Y$ of the morphism defined by a semiample
multiple. Then $\pi_*\OO_X=\OO_Y$, and a curve is contracted by $\pi$ if and
only if its numerical class lies in $R$. Moreover, the contraction has
relative Picard number one. Hence $\pi^*N^1(Y)_{\mathbb Q}$ and
$R^\perp\subseteq N^1(X)_{\mathbb Q}$ are codimension-one subspaces, and the
projection formula gives
$\pi^*N^1(Y)_{\mathbb Q}\subseteq R^\perp$. They are therefore equal, which
verifies \cite[Definition~4.1(3)]{MZ20}. Thus every extremal ray is
contractible.
Finally, a smooth rationally connected projective variety admitting an
int-amplified endomorphism is of Fano type
\cite[Corollary~6.7]{Yos21}, hence is a Mori dream space by
\cite[Corollary~1.3.2]{BCHM10}.
\end{proof}

\begin{remark}\label{rem: quantitative MZ}
Proposition~\ref{prop: contractible ray bound} is a quantitative refinement
of the finiteness theorem of Meng--Zhang
\cite[Theorem~4.5]{MZ20}. The resulting bound depends linearly on the numbers
$b_r$ because the proof fixes one irreducible component of a contraction
locus at a time, rather than counting all possible unions of such components.
\end{remark}


\begin{thebibliography}{99}

\bibitem{AC05}
E.~Amerik and F.~Campana,
\emph{Exceptional points of an endomorphism of the projective plane},
Math. Z. \textbf{249} (2005), no.~4, 741--754.

\bibitem{BCHM10}
C.~Birkar, P.~Cascini, C.~D.~Hacon, and J.~McKernan,
\emph{Existence of minimal models for varieties of log general type},
J. Amer. Math. Soc. \textbf{23} (2010), no.~2, 405--468.

\bibitem{BH14}
A.~Broustet and A.~H\"oring,
\emph{Singularities of varieties admitting an endomorphism},
Math. Ann. \textbf{360} (2014), no.~1--2, 439--456.

\bibitem{CLN00}
D.~Cerveau and A.~Lins Neto,
\emph{Hypersurfaces exceptionnelles des endomorphismes de
$\mathbb{C}\mathbf{P}(n)$},
Bol. Soc. Brasil. Mat. (N.S.) \textbf{31} (2000), no.~2, 155--161.

\bibitem{CLS11}
D.~A.~Cox, J.~B.~Little, and H.~K.~Schenck,
\emph{Toric varieties},
Graduate Studies in Mathematics, vol.~124, American Mathematical Society,
Providence, RI, 2011.

\bibitem{dFEM03}
T.~de Fernex, L.~Ein, and M.~Musta\c{t}\u{a},
\emph{Bounds for log canonical thresholds with applications to birational
rigidity},
Math. Res. Lett. \textbf{10} (2003), no.~2--3, 219--236.

\bibitem{DS03}
T.-C.~Dinh and N.~Sibony,
\emph{Dynamique des applications d'allure polynomiale},
J. Math. Pures Appl. (9) \textbf{82} (2003), no.~4, 367--423.

\bibitem{DS08}
T.-C.~Dinh and N.~Sibony,
\emph{Equidistribution towards the Green current for holomorphic maps},
Ann. Sci. \'Ec. Norm. Sup\'er. (4) \textbf{41} (2008), no.~2, 307--336.

\bibitem{DS09}
T.-C.~Dinh and N.~Sibony,
\emph{Super-potentials of positive closed currents, intersection theory and
dynamics},
Acta Math. \textbf{203} (2009), no.~1, 1--82.

\bibitem{DS10}
T.-C.~Dinh and N.~Sibony,
\emph{Equidistribution speed for endomorphisms of projective spaces},
Math. Ann. \textbf{347} (2010), no.~3, 613--626.

\bibitem{Fak03}
N.~Fakhruddin,
\emph{Questions on self maps of algebraic varieties},
J. Ramanujan Math. Soc. \textbf{18} (2003), no.~2, 109--122.

\bibitem{Fak14}
N.~Fakhruddin,
\emph{The algebraic dynamics of generic endomorphisms of $\mathbf P^n$},
Algebra Number Theory \textbf{8} (2014), no.~3, 587--608.

\bibitem{Fav03}
C.~Favre,
\emph{Equidistribution problems in holomorphic dynamics in $\mathbf P^2$},
in \emph{Dynamical systems. Part II}, Publ. Cent. Ric. Mat. Ennio Giorgi,
Scuola Norm. Sup., Pisa, 2003, pp.~79--111.

\bibitem{FJ03}
C.~Favre and M.~Jonsson,
\emph{Brolin's theorem for curves in two complex dimensions},
Ann. Inst. Fourier (Grenoble) \textbf{53} (2003), no.~5, 1461--1501.

\bibitem{Fuj83}
T.~Fujita,
\emph{Vanishing theorems for semipositive line bundles},
in \emph{Algebraic geometry (Tokyo/Kyoto, 1982)},
Lecture Notes in Mathematics, vol.~1016, Springer, Berlin, 1983,
pp.~519--528.

\bibitem{Ful}
W.~Fulton,
\emph{Intersection theory},
2nd ed., Ergebnisse der Mathematik und ihrer Grenzgebiete (3), vol.~2,
Springer-Verlag, Berlin, 1998.


\bibitem{FS94}
J.~E.~Forn\ae ss and N.~Sibony,
\emph{Complex dynamics in higher dimension. I},
Ast\'erisque \textbf{222} (1994), 201--231.

\bibitem{Har}
R.~Hartshorne,
\emph{Algebraic geometry},
Graduate Texts in Mathematics, vol.~52,
Springer-Verlag, New York--Heidelberg, 1977.

\bibitem{Hor17}
A.~H\"oring,
\emph{Totally invariant divisors of endomorphisms of projective spaces},
Manuscripta Math. \textbf{153} (2017), no.~1--2, 173--182.

\bibitem{HK00}
Y.~Hu and S.~Keel,
\emph{Mori dream spaces and GIT},
Michigan Math. J. \textbf{48} (2000), 331--348.

\bibitem{KT25}
T.~Kawakami and B.~Totaro,
\emph{Endomorphisms of varieties and Bott vanishing},
J. Algebraic Geom. \textbf{34} (2025), no.~2, 381--405.

\bibitem{Kee03}
D.~S.~Keeler,
\emph{Ample filters of invertible sheaves},
J. Algebra \textbf{259} (2003), no.~1, 243--283;
corrigendum, J. Algebra \textbf{507} (2018), 592--598.

\bibitem{Kol96}
J.~Koll\'ar,
\emph{Rational curves on algebraic varieties},
Ergebnisse der Mathematik und ihrer Grenzgebiete (3), vol.~32,
Springer-Verlag, Berlin, 1996.

\bibitem{Laz}
R.~Lazarsfeld,
\emph{Positivity in algebraic geometry. I: Classical setting: line bundles and linear series},
Ergebnisse der Mathematik und ihrer Grenzgebiete (3), vol.~48,
Springer-Verlag, Berlin, 2004.

\bibitem{Mab23}
Y.~Mabed,
\emph{Totally invariant divisors of non trivial endomorphisms of the
projective space},
Geom. Dedicata \textbf{217} (2023), no.~5, Paper No.~79, 12 pp.

\bibitem{Men20}
S.~Meng,
\emph{Building blocks of amplified endomorphisms of normal projective varieties},
Math. Z. \textbf{294} (2020), no.~3--4, 1727--1747.

\bibitem{MZ20}
S.~Meng and D.-Q.~Zhang,
\emph{Semi-group structure of all endomorphisms of a projective variety
admitting a polarized endomorphism},
Math. Res. Lett. \textbf{27} (2020), no.~2, 523--549.

\bibitem{Mil06}
J.~Milnor,
\emph{Dynamics in one complex variable},
3rd ed., Annals of Mathematics Studies, vol.~160, Princeton University Press,
Princeton, NJ, 2006.

\bibitem{Mum74}
D.~Mumford,
\emph{Abelian varieties},
2nd ed., with appendices by C.~P.~Ramanujam and Y.~Manin,
Tata Institute of Fundamental Research Studies in Mathematics, vol.~5,
Oxford University Press, London, 1974.

\bibitem{NZ10}
N.~Nakayama and D.-Q.~Zhang,
\emph{Polarized endomorphisms of complex normal varieties},
Math. Ann. \textbf{346} (2010), no.~4, 991--1018.

\bibitem{Stacks}
The Stacks Project Authors,
\emph{The Stacks Project},
\url{https://stacks.math.columbia.edu}.

\bibitem{Yos21}
S.~Yoshikawa,
\emph{Structure of Fano fibrations of varieties admitting an int-amplified
endomorphism},
Adv. Math. \textbf{391} (2021), Paper No.~107964.

\bibitem{Zha14}
D.-Q.~Zhang,
\emph{Invariant hypersurfaces of endomorphisms of projective varieties},
Adv. Math. \textbf{252} (2014), 185--203.

\bibitem{Zho21}
G.~Zhong,
\emph{Totally invariant divisors of int-amplified endomorphisms of normal
projective varieties},
J. Geom. Anal. \textbf{31} (2021), no.~3, 2568--2593.

\end{thebibliography}
\end{document}